\title{\bf\large draft 0}
\date{\vspace{-5ex}}
\author[1]{\small Adel M. Al-Mahdi}\author[2] {\small Mohammad M. Al-Gharabli}\author[3] {\small Mohammad
Kafini} \author[4] {\small Shadi Al-Omari} \affil[1,2,4]{\small\it
The Preparatory Year Program, King Fahd University of Petroleum and
Minerals, Dhahran 31261, Saudi Arabia.} \affil[3]{\small\it
Department of Mathematics and Statistics, King Fahd University of
Petroleum and Minerals, Dhahran 31261, Saudi Arabia.}
\affil[1,2,3,4]{The interdisciplinary research center in
construction and building materials, King Fahd University of
Petroleum and Minerals, Dhahran 31261, Saudi Arabia.}
\begin{document}
\title{\bf\large Stability  result for a viscoelastic wave equation in the presence of
finite and infinite memories} \maketitle

\begin{abstract}In this paper, we are concerned with the following viscoelastic wave
equation
\begin{equation*} \label{1}
u_{tt}-\nabla u +\int_0^t g_1 (t-s)~ div(a_1(x) \nabla u(s))~ ds +
\int_0^{+ \infty} g_2 (s)~ div(a_2(x) \nabla u(t-s)) ~ds = 0,
\end{equation*}
in a bounded domain $\Omega$. Under suitable conditions on $a_1$ and
$a_2$ and for a wide class of relaxation functions $g_1$ and $g_2$.
We establish a general decay result. The proof is based on the
multiplier method and makes use of convex functions and some
inequalities. More specifically, we remove the constraint imposed on
the boundedness condition on the initial data $\nabla u_{0}$. This
study generalizes and improves previous literature outcomes.
\end{abstract}\bf{Keywords:}\rm \;   Stability,  Finite and Infinite Memories,  Relaxation Functions, Multiplier Method, Convexity.\\\\ \bf{AMS
Classification.}\rm\; $35\text{B}40, 74\text{D}99, 75\text{D}05,
93\text{D}15, 93\text{D}20$. \numberwithin{equation}{section}
\newtheorem{theorem}{Theorem}[section]
\newtheorem{lemma}[theorem]{Lemma}
\newtheorem{remark}[theorem]{Remark}
\newtheorem{definition}[theorem]{Definition}
\newtheorem{proposition}[theorem]{Proposition}
\newtheorem{example}{Example}
\newtheorem{corollary}[theorem]{Corollary}
\allowdisplaybreaks
\section{Introduction} Let us consider an $n$-dimensional body occupies with a bounded  open set $\Omega \subseteq \mathbb{R}^n (n \geq 1)$
with smooth boundary $\partial \Omega$.  Let $x \rightarrow u(x,t)$
be the position of the material particle $x$ at time $t$. So that,
$\forall x \in \Omega, \forall t>0$,  the corresponding motion
equation is
\begin{equation} \label{main}
	\begin{array}{ll}
		u_{tt}-\nabla u +\int_0^t g_1 (t-s) div(a_1(x) \nabla u(s)) ds + \int_0^{+ \infty} g_2 (s) div(a_2(x) \nabla u(t-s))ds = 0, \\
		u(x, t) = 0,\\
		u(x, -t)=u_0 (x, t), u_t (x, 0)=u_1 (x).
	\end{array}
\end{equation} The
functions $g_1$ and $g_2$ are called the relaxation (kernels) functions and they are positive non-increasing and
defined on $\mathbb{R}^{+}$. The functions  $a_1$ and $a_2$ are
essentially bounded non-negative defined on $\Omega$. Here,  $u_0$
and $u_1$ are the given initial data. This model of materials
consisting of an elastic part (without memory) and a viscoelastic,
part, where the dissipation given by the memory is effective.\\
\textbf{In this paper}, we are concerned with the above viscoelastic
wave problem \eqref{main} and  mainly interested in the asymptotic
behavior of the solution $u$ when $t$ tends to infinity. In fact, We
prove that the solutions of the corresponding viscoelastic model
decay to zero and no matter how small is the viscoelastic part of
the material. Note that the above model is dissipative, and the
dissipation is given by the memory term only and  the memory is
effective only in a part of the body. For materials with memory the
stress depends not only on the present values but also on the entire
temporal history of the motion. Therefore, we have also to prescribe
the history of $u$ before 0. Here, we assume that $u(x, -t)=u_0 (x,
t),   \forall x \in \Omega, \forall
t>0.$ Let us mention some other papers related to the problems we
address. We start our literature review with the pioneer work of Dafermos
\cite{Dafermos}, in 1970, where the author discussed a certain
one-dimensional viscoelastic problem, established some existence
results, and then proved that, for smooth monotone decreasing
relaxation functions, the solutions go to zero as $t$ goes to
infinity. However, no rate of decay has been specified. In Dafermos
\cite{dafermos1970abstract}, a similar result, under a ity
condition on the kernel, has been established. After that a great
deal of attention has been devoted to the study of viscoelastic
problems and many existence and long-time behavior results have been
established. Hrusa \cite{hrusa1985global} considered a one-dimensional
nonlinear viscoelastic equation of the form
\begin{equation}
	u_{tt} - cu_{xx}  +\int_0^t m(t-s)  \left ( \psi (u_x(x, s))
	\right)_x~ ds = f(x, t)
\end{equation}
and proved several global existence results for large data. He also
proved an exponential decay for strong solutions when $m(s)=e^{-s}$
and $\psi$ satisfies certain conditions. Dassios and Zafiropoulos
\cite{Dassios} studied a viscoelastic problem in $\mathbb{R}^3$ and
proved a polynomial decay results for exponentially decaying
kernels. After that, a very important contribution by Rivera was
introduced. In 1994, Rivera \cite{Rivera} considered
equations for linear isotropic homogeneous viscoelastic solids of
integral type which occupy a bounded domain or the whole space
$\mathbb{R}^n$. in the bounded domain case, and for exponentially
decaying memory kernel and regular solutions, he showed that the sum
of the first and the second energy decays exponentially. For the
whole-space case and for exponentially decaying memory kernels, he
showed that the rate of decay of energy is of algebraic type and
depends on the regularity of the solution. this result was later
generalized to a situation, where the kernel is decaying
algebraically but not exponentially by Cabanillas and Rivera
\cite{Cabanillas}. In the paper, the authors considered the case of
bounded domains as well as the case when the material is occupying
the entire space and showed that the decay of solutions is also
algebraic, at a rate which can be determined by the rate of decay of
the relaxation function. This latter result was later improved by
Baretto et al. \cite{rivera1996decay}, where equations related to linear
viscoelastic plates were treated. Precisely, they showed that the
solution energy decays at the same rate of the relaxation function.
For partially viscoelastic material, Rivera and Salvatierra
\cite{Riv-Peres} showed that the energy decays
exponentially, provide the relaxation function decays in a similar
fashion and the dissipation is acting on a part of the domain near
to the boundary. See also, in this direction, the work of Rivera and
Oquendo \cite{Riv-Oqu}. The uniform decay of solutions for the
viscoelastic wave equation
\begin{equation}
	u_{tt} - k_0 \nabla u  +\int_0^t div  \left[ a(x) g(t- \tau ) \nabla
	u(\tau) \right] d \tau +b(x) h(u_t) + f(u) =0,
\end{equation} was investigated by Cavalcanti and Oquendo \cite{canti1} where they considered
the condition  $a(x)+b(x) \geq \delta> 0$. They established
exponential and  polynomial stability results based on some
conditions on  $g$ and the linearity of the function $h$. After
that, Guesmiaa and Messaoudib \cite{a1a2} extended the work of
\cite{canti1} and they establish a general decay result for
\eqref{main} under the same conditions on $a_1$ and $a_2$ used in
\cite{canti1} and for some other conditions for the relaxation
functions $g_1$ and $g_2$, from which the usual exponential and
polynomial decay rates are only special cases. More precisely, they
used the conditions $g_1^{\prime}(t)\le
-\xi(t)g_1(t),\hspace{0.15in}\forall t\ge 0,$ and
\begin{equation}\label{gg}
	\int_{0}^{+\infty}\frac{g_2(s)}{G^{-1}(-g_2^{\prime}(s))}ds+\sup_{s\in\mathbb{R}_+}\frac{g_2(s)}{G^{-1}(-g_2^{\prime}(s))}<+\infty,
\end{equation}
such that
\begin{equation}\label{E:3}
	G(0)=G^{\prime}(0)=0\hspace{0.05in}\text{and}\hspace{0.05in}\lim_{t\rightarrow
		+\infty}{G^{\prime}(t)}=+\infty,
\end{equation}
where $G:\mathbb{R}_+\to \mathbb{R}_+$  is an increasing strictly
convex function.\\ \textbf{In the present work}, we extend the works
of \cite{canti1} and \cite{a1a2}. In fact, we consider \eqref{main}
and under the same conditions on $a_1$ and $a_2$ used in
\cite{canti1} and for a large class of the relaxation functions, we
prove  a general decay result. More precisely, we assume that the
relaxations functions $g_1$ and $g_2$ are satisfying
$$g_i^{\prime}(t)\le -\xi(t) H_i(g_i(t)),\hspace{0.15in}\forall t\ge
0.$$ In fact, our result allows a large class of relaxation
functions and improves the decay rates in some earlier papers. The
paper is organized as follows. In Section 2, we present some
material needed for our work. Some essential lemmas are presented
and established in Section 3. Section 4 contains the statement and
the proof of our main result. We end our paper by giving some
illustrating examples in Sections 5.
\section{Preliminaries}
In this section, we present some material needed in the proof of
our main result. Through this paper, we  use $c$ to denote a positive generic constant. Now, we start with the following assumptions: \\
(A1) $g_i : \mathbb{R}^+ \longrightarrow \mathbb{R}^+ $ are
differentiable non-increasing functions such that
\begin{equation*}
	g_i (0) > 0, ~~ i=1,2, ~~~~~ 1-||a_1||_{\infty} \int_0^{+\infty}
	g_1(s)ds - ||a_2||_{\infty} \int_0^{+\infty} g_2(s) ds = l >0.
\end{equation*}and there exists  $C^1$ functions $H_i:\mathbb{R}_+ \to  \mathbb{R}_+$ which are linear or it is strictly increasing and strictly
convex $C^2$ function on $(0,r]$ for some $r > 0$ with $H_i(0)=H_i^{\prime}(0)=0$, such that
\begin{equation*}
	g_i^{\prime}(t)\le -\xi(t) H_i(g_i(t)),\hspace{0.15in}\forall t\ge
	0,
\end{equation*}where $\xi_i: \mathbb{R}^+ \longrightarrow \mathbb{R}^+$ are  positive nonincreasing differentiable
functions.\\ (A2) $a_i:\bar{\Omega} \longrightarrow \mathbb{R}^+$
are in $C^1 (\bar{\Omega})$ such that, for positive constant
$\delta$ and $a_0$ and for $\Gamma_1, \Gamma_2  \subset \partial
\Omega $ with means $(\Gamma_i)>0,~ i=1,2,~\inf_{x\in \bar{\Omega} }
(a_1(x)+a_2(x)) \geq \delta$ and
\begin{equation*}
	a_i=0~~~\text{or}~~~\inf_{\Gamma_i}~ a_i(x)\geq 2a_{0},~~~~~~i=1,2.
\end{equation*}
\begin{remark}
	If $a_i \neq 0,~i=1,2,$ there exist neighborhoods $w_i$ of
	$\Gamma_i,~i=1,2,$ such that $$\inf_{\bar {\Omega \bigcap w_i}}
	a_i(x) \geq a_0 > 0,~i=1,2.$$
\end{remark}

As in \cite{canti1,a1a2}, let $d=\min \{a_0, \delta \}$ and let
$\alpha _i \in C^1 (\bar{\Omega}),~i=1,2,$ be such that
\begin{equation}\label{ai}
	\begin{cases}
		\begin{array}{ll}
			0 \leq \alpha_i (x) \leq a_i(x) & \\
			\alpha_i(x)=0, & \text{if}~~ a_i(x)\leq \frac{d}{4} \\
			\alpha_i(x)=a_i(x), & \text{if}~~ a_i(x)\geq \frac{d}{2}. \\
		\end{array}
	\end{cases}
\end{equation}
\begin{lemma}
	The functions $\alpha_i,~i=1,2,$ are not identically zero and
	satisfy $\alpha_1(x)+\alpha_2(x) \geq \frac{d}{2}$.
\end{lemma}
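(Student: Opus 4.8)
The plan is to derive both assertions directly from the two defining relations of the cut-offs $\alpha_i$ in \eqref{ai}, the structural hypothesis (A2), and the elementary consequences of $d=\min\{a_0,\delta\}$, namely $d>0$ (since $\delta,a_0>0$), $d\le\delta$ and $d\le a_0$. No tools beyond continuity and these bookkeeping inequalities are needed.

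First I would prove the pointwise estimate $\alpha_1(x)+\alpha_2(x)\ge d/2$ for every $x\in\bar{\Omega}$. Fix such an $x$. By (A2) we have $a_1(x)+a_2(x)\ge\delta\ge d$, so at least one index, say $j\in\{1,2\}$, satisfies $a_j(x)\ge d/2$. For that index the last branch of \eqref{ai} applies and gives $\alpha_j(x)=a_j(x)\ge d/2$, while for the other index $k$ the first line of \eqref{ai} gives $\alpha_k(x)\ge 0$. Adding these, $\alpha_1(x)+\alpha_2(x)\ge\alpha_j(x)\ge d/2$, and since $x$ was arbitrary this is the claimed inequality.

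Next I would show $\alpha_i\not\equiv 0$, working in the relevant case $a_i\not\equiv 0$ (if instead $a_i\equiv 0$ the memory term it multiplies simply drops out, and by $a_1+a_2\ge\delta$ the remaining coefficient is bounded below by $\delta$ everywhere, in line with the Remark). Restrict to the boundary piece $\Gamma_i\subset\partial\Omega$, which is non-empty since it has positive measure. There (A2) gives $a_i(x)\ge 2a_0$, and since $a_0\ge d$ we get $2a_0\ge 2d\ge d/2$, so $a_i(x)\ge d/2$ on $\Gamma_i$; hence the last branch of \eqref{ai} again applies and yields $\alpha_i(x)=a_i(x)\ge 2a_0>0$ for all $x\in\Gamma_i$. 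Thus $\alpha_i$ is strictly positive on a set of positive measure, so it is not identically zero.

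I do not anticipate a genuine obstacle here: the argument is purely the correct bookkeeping of the constants $d,a_0,\delta$ and the correct selection, at each point, of the branch $\alpha_i=a_i$ rather than $\alpha_i=0$ in the definition \eqref{ai}. The existence and $C^1$ regularity of functions $\alpha_i$ satisfying \eqref{ai} is assumed, following \cite{canti1,a1a2}, so nothing has to be constructed; only the two stated conclusions have to be checked.
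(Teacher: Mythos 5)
Your proof is correct and follows essentially the same route as the paper: the same dichotomy on whether $a_1(x)\ge d/2$ or not (using $a_1+a_2\ge\delta\ge d$) to force the branch $\alpha_j=a_j$ for at least one index, and the same idea of locating a region where $a_i\ge d/2$ to get nontriviality. The only cosmetic difference is that for nontriviality the paper works on the interior neighborhood $\Omega\cap w_i$ from the Remark, where $a_i\ge a_0\ge d$, rather than on $\Gamma_i\subset\partial\Omega$ itself; this is marginally preferable because $\Gamma_i$ is Lebesgue-null in $\mathbb{R}^n$ and positivity of $\alpha_i$ on a set of positive $n$-dimensional measure is what is actually needed downstream, although for the literal claim ``not identically zero'' your version (together with the continuity you invoke) is sufficient.
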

\begin{proof}
	(1) For $x \in \Omega \cap w_i$, we have $a_i(x) \geq a_0 \geq d$,
	which implies, by \eqref{ai}, that $\alpha_i(x)=a_i(x) \geq d.$ Thus
	$\alpha_i$ is not identically zero.\\
	(2) If $a_1(x) \geq \frac{d}{2}$, then $\alpha_1(x)=a_1(x)$.
	Consequently $\alpha_1(x)+\alpha_2(x)\geq a_1(x) \geq \frac{d}{2}$.
	If $a_1 (x) < \frac{d}{2}$, then $a_2(x)>\frac{d}{2}$ which implies,
	by \eqref{ai}, $\alpha_2(x)=a_2(x)>\frac{d}{2}$. Consequently
	$\alpha_1 (x) + \alpha_2(x) >\frac{d}{2}$. This completes the proof.
\end{proof}
The existence and uniqueness of the solution of problem \eqref{main}
can be established by using the Galerkin method. We define the
"modified" energy functional of the weak solution of problem
\eqref{main}, by
\begin{equation}\label{E}
	\begin{array}{c}
		E(t)=\frac{1}{2} \int_{\Omega} u_t^2 \partial x + \frac{1}{2}
		\int_{\Omega} \left[1-a_1(x)\int_0^t g_1(s) ds - a_2(x)
		\int_0^{+\infty} g_2(s) ds \right] | \nabla u |^2 dx \\
		+\frac{1}{2}~ g_1 ~o~ \nabla u + \frac{1}{2}~ g_2~ o ~\nabla u,
	\end{array}
\end{equation}
where
\begin{equation}
	\begin{array}{l}
		g_1 ~o~ \nabla u = \int_{\Omega} a_1(x) \int_0^t g_1(t-s) |\nabla u(t)- \nabla u(s)|^2 ds ~dx \\
		g_2 ~o~ \nabla u = \int_{\Omega} a_2(x) \int_0^{+\infty} g_2 (s)
		|\nabla u(t)-\nabla u(t-s) |^2 ds~ dx \\.
	\end{array}
\end{equation}
\begin{lemma}
	The "modified" energy functional satisfies, along the solution of
	problem \eqref{main}, the following
	\begin{equation}\label{E'}
		E'(t)= -\frac{1}{2}~ g_1 (t) \int_{\Omega} |\nabla u(t)|^2 dx  +
		\frac{1}{2}~ g'_1~o~\nabla u +\frac{1}{2}~ g'_2~o~\nabla u \leq 0,
	\end{equation}
	where
	\begin{equation}
		\begin{array}{l}
			g'_1 ~o~ \nabla u = \int_{\Omega} a_1(x) \int_0^t g'_1(t-s) |\nabla u(t)- \nabla u(s)|^2 ~ds ~dx \\
			g'_2 ~o~ \nabla u= \int_{\Omega} a_2(x) \int_0^{+\infty} g'_2 (s)
			|\nabla u(t)-\nabla u(t-s)|^2~ ds~ dx \\.
		\end{array}
	\end{equation}
\end{lemma}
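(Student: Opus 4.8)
The plan is to prove the identity by the standard multiplier argument: multiply the first equation in \eqref{main} by $u_t$ and integrate over $\Omega$, using the boundary condition $u=0$ on $\partial\Omega$ (hence $u_t=0$ there). Two of the four terms are immediate: $\int_\Omega u_{tt}u_t\,dx=\tfrac12\frac{d}{dt}\int_\Omega u_t^2\,dx$, and, after Green's identity (the boundary contribution vanishing since $u_t=0$ on $\partial\Omega$), $-\int_\Omega \Delta u\,u_t\,dx=\int_\Omega\nabla u\cdot\nabla u_t\,dx=\tfrac12\frac{d}{dt}\int_\Omega|\nabla u|^2\,dx$. For each memory term, an integration by parts in $x$ (again with no boundary contribution, as $u_t=0$ on $\partial\Omega$) converts $\int_\Omega\big(\int g_i(\cdot)\,\mathrm{div}(a_i(x)\nabla u(\cdot))\,ds\big)u_t\,dx$ into $-\int_\Omega a_i(x)\big(\int g_i(\cdot)\,\nabla u(\cdot)\,ds\big)\cdot\nabla u_t(t)\,dx$. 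One then compares the outcome with $\frac{d}{dt}E(t)$ obtained by differentiating \eqref{E} directly; the only non-routine pieces there are $\tfrac12\frac{d}{dt}(g_1\circ\nabla u)$ and $\tfrac12\frac{d}{dt}(g_2\circ\nabla u)$, which must be matched against the memory contributions.

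For the finite-memory term I would insert $\nabla u(s)=\nabla u(t)-\big(\nabla u(t)-\nabla u(s)\big)$, splitting it into an ``instantaneous'' part $-\big(\int_0^t g_1(s)\,ds\big)\int_\Omega a_1(x)\,\nabla u(t)\cdot\nabla u_t(t)\,dx$ and a ``history'' part carrying $\nabla u(t)-\nabla u(s)$. Using $2\big(\nabla u(t)-\nabla u(s)\big)\cdot\nabla u_t(t)=\partial_t|\nabla u(t)-\nabla u(s)|^2$ (with $s$ frozen) together with the Leibniz rule
\[
\frac{d}{dt}\int_0^t g_1(t-s)|\nabla u(t)-\nabla u(s)|^2\,ds=\int_0^t g_1'(t-s)|\nabla u(t)-\nabla u(s)|^2\,ds+\int_0^t g_1(t-s)\,\partial_t|\nabla u(t)-\nabla u(s)|^2\,ds,
\]
in which the boundary term at $s=t$ drops out because $\nabla u(t)-\nabla u(t)=0$, the history part becomes $\tfrac12\frac{d}{dt}(g_1\circ\nabla u)-\tfrac12(g_1'\circ\nabla u)$. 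When this is set against $\frac{d}{dt}E$, the $\nabla u\cdot\nabla u_t$ contributions cancel, and differentiating the $t$-dependent coefficient $\int_0^t g_1(s)\,ds$ in \eqref{E} leaves precisely the term $-\tfrac12 g_1(t)\int_\Omega a_1(x)|\nabla u(t)|^2\,dx$.

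For the infinite-memory term the analogous split $\nabla u(t-s)=\nabla u(t)-\big(\nabla u(t)-\nabla u(t-s)\big)$ again produces an instantaneous part that cancels the matching $\nabla u\cdot\nabla u_t$ term from $\frac{d}{dt}E$, plus a history part handled via the identity
\[
2\big(\nabla u(t)-\nabla u(t-s)\big)\cdot\nabla u_t(t)=\partial_t|\nabla u(t)-\nabla u(t-s)|^2+\partial_s|\nabla u(t)-\nabla u(t-s)|^2 .
\]
Since the $s$-limits are fixed, the $\partial_t$ piece reassembles $\tfrac12\frac{d}{dt}(g_2\circ\nabla u)$, while an integration by parts in $s$ on the $\partial_s$ piece produces $-\tfrac12(g_2'\circ\nabla u)$, the boundary terms $g_2(s)|\nabla u(t)-\nabla u(t-s)|^2$ vanishing at $s=0$ (trivially) and as $s\to+\infty$ (in the history framework). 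Collecting all contributions gives $E'(t)=-\tfrac12 g_1(t)\int_\Omega a_1(x)|\nabla u(t)|^2\,dx+\tfrac12(g_1'\circ\nabla u)+\tfrac12(g_2'\circ\nabla u)$, and $E'(t)\le 0$ then follows at once from $g_1\ge 0$, $a_i\ge 0$ and $g_i'\le 0$ (the monotonicity in (A1)--(A2)).

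The main obstacle is not the algebra but its justification: the computation above is formal for a weak solution, so I would first carry it out on the smooth (e.g.\ Galerkin) approximations, where differentiation under the integral sign and the integrations by parts in $x$ and in $s$ are all legitimate, and then pass to the limit or argue by density. Within that, the genuinely delicate point is the infinite-memory term: one needs the functional setting (the relative-history variable living in a suitable weighted $L^2$-space built from $g_2$) to ensure that $\int_0^{+\infty} g_2(s)|\nabla u(t)-\nabla u(t-s)|^2\,ds$ is finite and that, combined with $g_2(s)\to 0$, the boundary term at $s=+\infty$ in the integration by parts vanishes; this is precisely where the hypotheses on $g_2$ and on the prescribed history $u_0$ enter.
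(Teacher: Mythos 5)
Your proposal is correct and follows exactly the route the paper itself indicates (multiply the first equation of \eqref{main} by $u_t$, integrate over $\Omega$, integrate by parts in $x$ and in $s$, and extend to weak solutions by density); the paper merely sketches this and refers to \cite{Cabanillas,Cavalcanti1}, whereas you spell out the standard splitting $\nabla u(s)=\nabla u(t)-(\nabla u(t)-\nabla u(s))$ and the Leibniz/$\partial_s$-integration details. Note only that your computation yields the first term as $-\tfrac12 g_1(t)\int_\Omega a_1(x)|\nabla u(t)|^2\,dx$, with the weight $a_1(x)$, which is the correct form (the statement \eqref{E'} appears to drop the factor $a_1(x)$ by a typo); since $a_1\ge 0$ this does not affect the conclusion $E'(t)\le 0$.
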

\begin{proof}
	By multiplying the first equation in problem \eqref{main} by $u_t$
	and integrating over $\Omega$, using integration by parts,
	hypotheses $(A1)$ and $(A2)$ and some manipulations as in
	\cite{Cabanillas,Cavalcanti1} and others, we obtain \eqref{E'} for
	regular solutions. This inequality remains valid for weak solutions
	by a simple density argument.
\end{proof}

\section{Technical lemmas}
In this section, we introduce some fundamental lemmas. These
lemmas will help us for proving our results. We use our equations
and assumptions to prove those lemmas.
\begin{lemma} \cite{Mus-2017}For $i=1,2,$ we have
	\begin{equation}\label{kmustafa}
		\int_0^1 \left(\int_{0}^{t}g_i(t-s)\left (\nabla u(s)-\nabla
		u(t-s)\right) ds\right)^{2}dx \leq C_{\alpha}(h_i \circ \nabla
		u)(t),
	\end{equation}
	where, for any  $0 <\alpha < 1$,
	\begin{equation}\label{L1:new k}
		C_{\alpha}=\int_{0}^{t}\frac{g_i^2(s)}{\alpha g_i(s)-g_i'(s)}ds
		\hspace{0.15in}\text{ and }\hspace{0.15in}h_i(t)=\alpha
		g_i(t)-g_i^{\prime}(t).
	\end{equation}
\end{lemma}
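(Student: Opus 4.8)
The plan is to obtain \eqref{kmustafa} from a single, well-chosen application of the Cauchy--Schwarz inequality in the time variable, following \cite{Mus-2017}. Fix $x\in\Omega$ and $t\ge 0$. Since $g_i$ is positive and non-increasing by assumption (A1), the quantity $\alpha g_i(s)-g_i'(s)=h_i(s)$ is strictly positive for every $0<\alpha<1$, so we may legitimately write, inside the inner time-integral,
\[
g_i(t-s)=\frac{g_i(t-s)}{\sqrt{\alpha g_i(t-s)-g_i'(t-s)}}\cdot\sqrt{\alpha g_i(t-s)-g_i'(t-s)},
\]
and then apply Cauchy--Schwarz in $s$ over $(0,t)$ (over $(0,+\infty)$ in the infinite-memory case $i=2$, with the shift $\nabla u(t)-\nabla u(t-s)$ replacing $\nabla u(t)-\nabla u(s)$).

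This produces a product of two factors. The first is $\int_0^t \frac{g_i^2(t-s)}{\alpha g_i(t-s)-g_i'(t-s)}\,ds$, which the change of variables $\tau=t-s$ turns into $\int_0^t\frac{g_i^2(\tau)}{\alpha g_i(\tau)-g_i'(\tau)}\,d\tau=C_\alpha$; this is the point where (A1) and the structural hypotheses on $H_i$ are invoked, to guarantee $C_\alpha<+\infty$. The second factor is $\int_0^t\bigl(\alpha g_i(t-s)-g_i'(t-s)\bigr)\,|\nabla u(t)-\nabla u(s)|^2\,ds=\int_0^t h_i(t-s)\,|\nabla u(t)-\nabla u(s)|^2\,ds$, which after multiplication by $a_i(x)$ and integration over $\Omega$ is precisely $(h_i\circ\nabla u)(t)$. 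Integrating the resulting pointwise-in-$x$ inequality over $\Omega$ and pulling the $x$-independent constant $C_\alpha$ outside the spatial integral yields \eqref{kmustafa}.

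The analytic heart of the argument is the one-line Cauchy--Schwarz step above; the remaining work is bookkeeping. One must keep the weight $a_i(x)$ and the $s$-integration domain consistent on the two sides of the inequality (finite memory $(0,t)$ for $i=1$ versus infinite memory $(0,+\infty)$ for $i=2$), and one must verify that $C_\alpha$ is finite --- which for $i=1$ is immediate on bounded time intervals and for $i=2$ follows from the decay hypotheses on $g_2$ encoded in (A1). I expect this finiteness check, rather than the inequality itself, to be the step needing the most attention.
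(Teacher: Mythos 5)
Your proof is correct and is precisely the standard weighted Cauchy--Schwarz argument from the cited source \cite{Mus-2017}; the paper states this lemma without proof, and your argument is the intended one. The only caveat is the paper's own notational slip --- since $\circ$ is defined here to include the weight $a_i(x)$, the left-hand side must carry that weight as well (and the outer integral should be over $\Omega$, not $(0,1)$) --- which you already flag in your bookkeeping remarks.
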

Furthermore, using the fact that $$\frac{\alpha g_i^2(s)}{\alpha
	g_i(s)-g_i'(s)} < g_i(s)$$ and recalling the Lebesgue dominated
convergence theorem, one can deduce that
\begin{equation}\label{convergence}
	\alpha C_{\alpha}=\int_{0}^{\infty}\frac{\alpha g_i^2(s)}{\alpha
		g_i(s)-g_i'(s)}ds \to 0 \text{ as }\alpha \to 0.
\end{equation}
\begin{lemma}\label{cor2}
	There exists a positive constant $M_1$ such that
	\begin{equation}\label{h}
		\begin{aligned}
			\int_{\Omega} a_2(x) \int_{t}^{+\infty} g_2(s)\vert  \nabla u
			(t)-\nabla u(t-s)\vert_2^{2} ds  \leq M_1 h_0(t),
		\end{aligned}
	\end{equation}where $h_0(t)=\int_{0}^{+\infty}g_2(t+s)\left(1+\vert \vert \nabla u_{0}(s)\vert\vert^{2}
	\right) ds$.
\end{lemma}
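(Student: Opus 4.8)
The plan is to estimate the tail integral $\int_t^{+\infty} g_2(s)\,|\nabla u(t)-\nabla u(t-s)|^2\,ds$ by splitting off the history data. Recall that for $s > t$ we have $t - s < 0$, so $\nabla u(t-s) = \nabla u_0(x, s-t)$ by the prescribed history condition $u(x,-\tau)=u_0(x,\tau)$. First I would use the elementary inequality $|\nabla u(t)-\nabla u(t-s)|^2 \le 2|\nabla u(t)|^2 + 2|\nabla u_0(s-t)|^2$ and substitute $\sigma = s - t$ in the integral, turning $\int_t^{+\infty} g_2(s)(\cdots)\,ds$ into $\int_0^{+\infty} g_2(t+\sigma)\bigl(2|\nabla u(t)|^2 + 2|\nabla u_0(\sigma)|^2\bigr)\,d\sigma$. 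Then I would integrate over $\Omega$ against $a_2(x)$, using $\|a_2\|_\infty < \infty$ from (A1).

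The next step is to control $|\nabla u(t)|^2$ integrated over $\Omega$. Since $E$ is non-increasing by the energy lemma \eqref{E'}, and since the coefficient $1 - a_1(x)\int_0^t g_1 - a_2(x)\int_0^{+\infty} g_2 \ge l > 0$ by (A1), the term $\frac{l}{2}\int_\Omega |\nabla u(t)|^2\,dx \le E(t) \le E(0)$, so $\int_\Omega |\nabla u(t)|^2\,dx$ is bounded by a constant depending only on the data. Hence
\begin{equation*}
\int_\Omega a_2(x)\int_0^{+\infty} g_2(t+\sigma)\,|\nabla u(t)|^2\,d\sigma\,dx \le c\int_0^{+\infty} g_2(t+\sigma)\,d\sigma,
\end{equation*}
which is bounded by $c\int_0^{+\infty} g_2(t+\sigma)\bigl(1+\|\nabla u_0(\sigma)\|^2\bigr)\,d\sigma = c\,h_0(t)$. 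The remaining piece $\int_\Omega a_2(x)\int_0^{+\infty} g_2(t+\sigma)\,|\nabla u_0(\sigma)|^2\,d\sigma\,dx$ is bounded by $\|a_2\|_\infty \int_0^{+\infty} g_2(t+\sigma)\,\|\nabla u_0(\sigma)\|^2\,d\sigma \le \|a_2\|_\infty\, h_0(t)$. Adding the two contributions and collecting constants into a single $M_1$ gives \eqref{h}.

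The main obstacle — and the whole point of the lemma, which is why the paper advertises that it removes the boundedness hypothesis on $\nabla u_0$ — is that one must \emph{not} bound $\|\nabla u_0(\sigma)\|^2$ by a constant; instead it is kept inside the integral weighted by $g_2(t+\sigma)$, and the decay in $t$ is entirely inherited from the shift in the argument of $g_2$. So the delicate point is simply bookkeeping: making sure every term that survives is dominated by the \emph{same} expression $\int_0^{+\infty} g_2(t+\sigma)(1+\|\nabla u_0(\sigma)\|^2)\,d\sigma$ rather than by something like $\bigl(\int_0^{+\infty} g_2(t+\sigma)\,d\sigma\bigr)\cdot\bigl(\sup_\sigma \|\nabla u_0(\sigma)\|^2\bigr)$. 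Provided the change of variables $\sigma = s-t$ and the factor-of-two splitting are done cleanly, and the uniform energy bound on $\int_\Omega|\nabla u|^2$ is invoked, the estimate follows with $M_1$ depending only on $l$, $E(0)$, and $\|a_2\|_\infty$.
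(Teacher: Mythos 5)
Your proposal is correct and follows essentially the same route as the paper's proof: the factor-of-two splitting of $|\nabla u(t)-\nabla u(t-s)|^2$, the change of variables turning $g_2(s)$ on $[t,+\infty)$ into $g_2(t+\sigma)$ on $[0,+\infty)$ with $\nabla u(-\sigma)=\nabla u_0(\sigma)$, and the uniform bound $\int_\Omega|\nabla u(t)|^2\,dx\le cE(0)/\ell$ from the monotone energy. The constant $M_1$ you obtain matches the paper's $\max\{2\|a_2\|_\infty,\,4\|a_2\|_\infty E(0)/\ell\}$ up to normalization.
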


\begin{proof}The proof is identical to the one in
	\cite{Guesmiaetalrecent}. Indeed, we have
	\begin{equation}\label{hproof}
		\begin{aligned}
			&\int_{\Omega} a_2(x)\int_{t}^{+\infty}g_2(s) \vert \nabla
			u(t)-\nabla u(t-s) \vert_2^{2} ds \leq \\ &  2\vert \vert a_2
			\vert \vert_{\infty}  \vert \vert \nabla u(t) \vert \vert^2
			\int_{t}^{+\infty} g_2(s) ds + 2\vert \vert a_2 \vert
			\vert_{\infty}\int_{t}^{+\infty}g_2(s) \vert \vert
			\nabla u(t-s)  \vert \vert^2 ds\\
			&\leq 2 \vert \vert a_2 \vert \vert_{\infty}\sup_{s\geq 0}\vert
			\vert  \nabla u(s)  \vert \vert^2 \int_{0}^{+\infty}g_2(t+s) ds +
			2\vert \vert a_2 \vert \vert_{\infty} \int_{0}^{+\infty}g_2(t+s)
			\vert \vert
			\nabla u(-s)  \vert \vert^2 ds\\
			&\leq \frac{4 \vert \vert a_2 \vert \vert_{\infty} E(s)}{\ell}
			\int_{0}^{+\infty}g_2(t+s) ds +2 \vert \vert a_2 \vert
			\vert_{\infty}\int_{0}^{+\infty}g_2(t+s)
			\vert \vert \nabla u_{0}(s)  \vert \vert^2 ds\\
			&\leq \frac{4 \vert \vert a_2 \vert \vert_{\infty}E(0)}{\ell}
			\int_{0}^{+\infty}g_2(t+s) ds +2\vert \vert a_2 \vert
			\vert_{\infty}
			\int_{0}^{+\infty}g_2(t+s) \vert \vert \nabla u_{0}(s)  \vert \vert^2 ds\\
			&\leq M_1 \int_{0}^{+\infty}g_2(t+s)\left(1+\vert\vert \nabla
			u_{0}(s)\vert\vert^{2} \right) ds.
		\end{aligned}
	\end{equation}
	where $M_1=\max\big\{2 \vert \vert a_2 \vert \vert_{\infty},
	\frac{4 \vert \vert a_2 \vert \vert_{\infty} E(0)}{\ell}
	\big\}.$
\end{proof}

\begin{lemma}\cite{a1a2}
	Assume that $(A1)$ and $(A2)$ are hold, the functionals
	\begin{equation*}
		\psi_1
		(t)=\int_{\Omega} u u_t dx,
	\end{equation*}
	\begin{equation*}
		\psi_2 (t)=-\int_{\Omega} \alpha_1 (x) u_t \int_0^t g_1 (t-s)
		\left(  u(t)-u(s)    \right)~ ds~dx,
	\end{equation*}
	\begin{equation*}
		\psi_3 (t)=-\int_{\Omega} \alpha_2 (x) u_t \int_0^{+\infty} g_2
		(s) \left(  u(t)-u(t-s)    \right)~ ds~dx,
	\end{equation*}
	satisfies, along the solution of \eqref{main}  and for any
	$\epsilon_1, \epsilon_2, \epsilon_3 > 0$ and $\forall t \geq 0$
	the following estimates
	\begin{equation}\label{psi1}
		\begin{array}{l}
			\psi_1'(t) \leq \int_{\Omega} u_t^2 dx - \left[ 1- \epsilon_1 -
			||a_1||_{\infty} \int_0^{+\infty} g_1(s) ds - ||a_2||_{\infty}
			\int_0^{+ \infty} g_2 (s) ds \right] \int_{\Omega} |\nabla u|^2 dx\\
			~~~~~~~~~+ \frac{c C_{\alpha}}{\epsilon_1} \left(  h_1 ~o~ \nabla u
			+ h_2 ~o~ \nabla u \right),
		\end{array}
	\end{equation}
	
	\begin{equation}\label{psi2}
		\begin{array} {l}
			\psi'_2 (t) \leq -    \left[ \int_0^t g_1(s) ds- \epsilon_2 \right]
			\int_{\Omega} \alpha_1 (x) u_t^2 dx + \frac{\epsilon_3}{2}
			\int_{\Omega} |\nabla u|^2 dx - \frac{c}{\epsilon_2}  g'_1 ~o~
			\nabla u  \\ ~~~~~~~ + \frac{c C_{\alpha}}{\epsilon_3} \left( h_1
			~o~\nabla u+ h_2 ~o~\nabla u \right),
		\end{array}
	\end{equation}
	
	\begin{equation}\label{psi3}
		\begin{array} {l}
			\psi'_3 (t) \leq - \left[ \int_0^{+\infty} g_2(s) ds- \epsilon_2
			\right] \int_{\Omega} \alpha_2 (x) u_t^2 dx + \frac{\epsilon_3}{2}
			\int_{\Omega} |\nabla u|^2 dx - \frac{c}{\epsilon_2}  g'_2 ~o~
			\nabla u  \\ ~~~~~~~ + \frac{c C_{\alpha}}{\epsilon_3} \left( h_1
			~o~\nabla u+ h_2 ~o~\nabla u \right).
		\end{array}
	\end{equation}
	
\end{lemma}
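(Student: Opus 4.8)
The plan is to handle the three functionals by the same multiplier scheme, exploiting that $\psi_1$ is designed to produce a negative multiple of $\int_\Omega|\nabla u|^2\,dx$ (at the price of a $+\int_\Omega u_t^2\,dx$ term and convolution remainders), while $\psi_2$ and $\psi_3$ are designed to produce the negative terms $-\int_\Omega\alpha_i(x)u_t^2\,dx$ that will later absorb the kinetic energy generated by $\psi_1$. In each case I would: differentiate the functional along a regular solution of \eqref{main} (extending to weak solutions by the usual density argument), eliminate $u_{tt}$ by means of the first equation of \eqref{main}, integrate by parts in $x$ using $u=0$ on $\partial\Omega$, decompose each $\nabla u(s)$ (resp.\ $\nabla u(t-s)$) as $\nabla u(t)-(\nabla u(t)-\nabla u(s))$, and then estimate the resulting cross terms by Young's inequality keeping the parameters $\epsilon_1,\epsilon_2,\epsilon_3$ free, together with the Cauchy--Schwarz-type bound \eqref{kmustafa} for the squared convolutions of the relative history and the Poincaré inequality.

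For $\psi_1$ I would write $\psi_1'(t)=\int_\Omega u_t^2\,dx+\int_\Omega u\,u_{tt}\,dx$; substituting $u_{tt}$ and integrating by parts produces $-\int_\Omega|\nabla u|^2\,dx$ from the elastic term (since $\int_\Omega u(-\Delta u)\,dx=\int_\Omega|\nabla u|^2\,dx$) and, from the two memory terms, expressions of the form $\big(\int_0^t g_1(s)\,ds\big)\int_\Omega a_1(x)|\nabla u|^2\,dx$ (and likewise for $g_2$ with $\int_0^{+\infty}$) plus a convolution remainder. Bounding the remainder by $\epsilon_1\int_\Omega|\nabla u|^2\,dx+\tfrac{c}{\epsilon_1}\sum_i\int_\Omega a_i(x)\big(\int_0^t g_i(t-s)|\nabla u(t)-\nabla u(s)|\,ds\big)^2\,dx$, then applying \eqref{kmustafa} to replace the last integral by $C_\alpha(h_i\circ\nabla u)$, and finally collecting the coefficients of $\int_\Omega|\nabla u|^2\,dx$ (using $a_i\le\|a_i\|_\infty$ and $\int_0^t g_i\le\int_0^{+\infty}g_i$) gives exactly \eqref{psi1}.

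For $\psi_2$, and identically for $\psi_3$ with $\int_0^{+\infty}$, $\alpha_2$, $g_2$ in place of $\int_0^t$, $\alpha_1$, $g_1$, I would differentiate the convolution factor: $\tfrac{d}{dt}\int_0^t g_1(t-s)(u(t)-u(s))\,ds=\big(\int_0^t g_1(s)\,ds\big)u_t+\int_0^t g_1'(t-s)(u(t)-u(s))\,ds$, which at once yields the leading term $-\big(\int_0^t g_1(s)\,ds\big)\int_\Omega\alpha_1(x)u_t^2\,dx$, absorbed into the form $-[\int_0^t g_1-\epsilon_2]\int_\Omega\alpha_1 u_t^2\,dx$ after a Young step with $\epsilon_2$. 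The $g_1'$-contribution is kept as $-\tfrac{c}{\epsilon_2}\,g_1'\circ\nabla u\ge 0$ (since $g_1'\le0$); the term coming from substituting $u_{tt}$ and integrating by parts — where the $x$-dependence of $\alpha_1$ forces extra terms involving $\nabla\alpha_1$, controlled because $\alpha_1\in C^1(\bar\Omega)$ and $0\le\alpha_1\le a_1$ — is split by Young's inequality with $\epsilon_3$ into $\tfrac{\epsilon_3}{2}\int_\Omega|\nabla u|^2\,dx$ plus memory cross terms, and the latter are absorbed through \eqref{kmustafa} into $\tfrac{cC_\alpha}{\epsilon_3}(h_1\circ\nabla u+h_2\circ\nabla u)$. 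Poincaré's inequality converts the residual $\int_\Omega u^2\,dx$-type quantities into $\int_\Omega|\nabla u|^2\,dx$, and all remaining numerical coefficients are bounded using that $E$ is nonincreasing, hence $\le E(0)$; this produces \eqref{psi2} and \eqref{psi3}.

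The hard part will be the infinite-memory term in $\psi_3$: one must justify differentiating under the integral over $(0,+\infty)$ and control the portion of the convolution that reaches into the prescribed history $u_0$. This I would handle exactly as in Lemma~\ref{cor2}, using $\sup_{s\ge0}\|\nabla u(s)\|^2\le cE(0)$ and the integrability built into $(A1)$, so that the history contributions remain dominated by the already-controlled memory quantities. A secondary technical nuisance is that the integration by parts in $\psi_2,\psi_3$ does not commute cleanly with the weights $\alpha_i(x)$: the $\nabla\alpha_i$ terms must be grouped with $\int_\Omega|\nabla u|^2\,dx$ through Young's inequality and absorbed into $\epsilon_3$ (which is precisely why $\alpha_i$ is taken of class $C^1$ in $(A2)$). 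Everything else is routine bookkeeping of the $\epsilon$-coefficients.
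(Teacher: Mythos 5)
Your proposal is correct and follows the standard multiplier computation; the paper itself gives no proof of this lemma, citing it directly from \cite{a1a2}, and your sketch (differentiate, substitute the equation, split $\nabla u(s)=\nabla u(t)-(\nabla u(t)-\nabla u(s))$, Young with $\epsilon_1,\epsilon_2,\epsilon_3$, and \eqref{kmustafa} for the convolution remainders) is precisely the argument carried out in that reference. You also correctly flag the only delicate points, namely the $\nabla\alpha_i$ terms and the infinite-memory tail handled as in Lemma~\ref{cor2}.
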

\begin{lemma}\label{lemma3}
	Assume that $(A1)$ and $(A2)$ are hold, then for a suitable choice
	of  $ N, M, m > 0$ and for all $t \geq 0$, the functional
	$$\mathcal{L}(t):=NE(t)+M \psi_{1}(t)+ {\psi_{2}}(t)+{\psi_{3}}(t)$$
	satisfies for any $t>0$
	\begin{equation}\label{E:main0}
		\mathcal{L}^{\prime}(t) \le - m E(t)+ \frac{1}{4} \bigg(g_1 \circ
		\nabla u + g_2 \circ \nabla u)\bigg)(t).
	\end{equation}
\end{lemma}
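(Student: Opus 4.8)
The plan is to differentiate $\mathcal L$ term by term, substitute the identity \eqref{E'} together with the three estimates \eqref{psi1}, \eqref{psi2} and \eqref{psi3}, and then fix the free parameters $\epsilon_1,\epsilon_2,\epsilon_3,\alpha,M,N$ in a carefully prescribed order so that every indefinite contribution is absorbed. Since $\mathcal L'(t)=NE'(t)+M\psi_1'(t)+\psi_2'(t)+\psi_3'(t)$, and since by \eqref{L1:new k} one has $h_i\circ\nabla u=\alpha\,(g_i\circ\nabla u)-g_i'\circ\nabla u$, summing the four inequalities produces an estimate whose right-hand side is a linear combination of $\int_\Omega u_t^2\,dx$, of $\int_\Omega|\nabla u|^2\,dx$, of $(g_1\circ\nabla u+g_2\circ\nabla u)(t)$ with a nonnegative coefficient (coming only from the $\alpha\,(g_i\circ\nabla u)$ parts of the $h_i$-terms), of $-(g_1'\circ\nabla u+g_2'\circ\nabla u)(t)$ with a fixed nonnegative coefficient minus $\tfrac N2$ from $NE'$, plus the nonpositive term $-\tfrac N2 g_1(t)\int_\Omega|\nabla u|^2\,dx$ from $NE'$.

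I would then choose the constants as follows. First take $\epsilon_1=l/2$, so that \eqref{psi1} contributes $-\tfrac{Ml}{2}$ to the coefficient of $\int_\Omega|\nabla u|^2\,dx$. Next fix $t_0>0$; since $g_1$ is positive and continuous, $g_1^*:=\int_0^{t_0}g_1(s)\,ds>0$, while $g_2^*:=\int_0^{+\infty}g_2(s)\,ds>0$, so for $t\ge t_0$ one may take $\epsilon_2=\tfrac12\min\{g_1^*,g_2^*\}$; combined with the lemma $\alpha_1(x)+\alpha_2(x)\ge\tfrac d2$, the $u_t^2$-coefficient from $\psi_2'+\psi_3'$ is then at most $-\tfrac d4\min\{g_1^*,g_2^*\}$. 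Now pick $M>0$ small enough that the net coefficient of $\int_\Omega u_t^2\,dx$, namely $M-\tfrac d4\min\{g_1^*,g_2^*\}$, is some $-A<0$, and then $\epsilon_3>0$ small enough that the net coefficient of $\int_\Omega|\nabla u|^2\,dx$, namely $-\tfrac{Ml}{2}+\epsilon_3$ (the $NE'$ term only helping), is some $-B<0$. Since $\alpha C_\alpha\to0$ as $\alpha\to0$ by \eqref{convergence}, choose $\alpha\in(0,1)$ so small that the total coefficient of $(g_1\circ\nabla u+g_2\circ\nabla u)(t)$ is $\le\tfrac18$. Finally, with $\alpha$ — hence the constant $C_\alpha$ — now frozen, take $N$ large enough that $\tfrac N2$ exceeds the finitely many fixed constants ($M c C_\alpha/\epsilon_1$, $c C_\alpha/\epsilon_3$, $c/\epsilon_2$) multiplying $-g_i'\circ\nabla u$, so that these terms are absorbed into $\tfrac N2(g_1'\circ\nabla u+g_2'\circ\nabla u)(t)\le0$.

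With all choices made, for $t\ge t_0$ one is left with $\mathcal L'(t)\le -A\int_\Omega u_t^2\,dx-B\int_\Omega|\nabla u|^2\,dx+\tfrac18(g_1\circ\nabla u+g_2\circ\nabla u)(t)$. From \eqref{E} and $(A1)$ one has $E(t)\le\tfrac12\int_\Omega u_t^2\,dx+\tfrac12\int_\Omega|\nabla u|^2\,dx+\tfrac12(g_1\circ\nabla u+g_2\circ\nabla u)(t)$, so putting $m:=\min\{2A,2B,\tfrac14\}$ yields $\mathcal L'(t)\le -mE(t)+\tfrac14(g_1\circ\nabla u+g_2\circ\nabla u)(t)$, which is \eqref{E:main0}. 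As in \cite{a1a2} one works on $t\ge t_0$ with $t_0$ fixed, which is all that is needed for the decay argument of Section~4.

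I expect the main obstacle to be the bookkeeping of the order of these choices rather than any single estimate. The tension is that $M$ must be taken \emph{small} — so that the $u_t^2$-dissipation produced by $\psi_2'+\psi_3'$ (which is available everywhere thanks to $\alpha_1+\alpha_2\ge\tfrac d2$) beats the $+M\int_\Omega u_t^2\,dx$ coming from $\psi_1'$ — which in turn forces $\epsilon_3$, and ultimately $m$, to be small; and $\alpha$ must be fixed \emph{before} $N$, because $C_\alpha$ blows up as $\alpha\to0$ while only the product $\alpha C_\alpha$ is small, so $N$ is calibrated against the already-frozen $C_\alpha$. It is also worth checking in passing that $\mathcal L(t)$ is equivalent to $E(t)$ (i.e. $|\mathcal L(t)-NE(t)|\le cE(t)$, hence $\tfrac12 NE(t)\le\mathcal L(t)\le\tfrac32 NE(t)$ for $N$ large); this is not needed for \eqref{E:main0} itself but is used when the inequality is iterated in Section~4.
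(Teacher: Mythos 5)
Your proposal is correct and follows essentially the same route as the paper: differentiate $\mathcal{L}$, insert the energy identity \eqref{E'} and the three estimates for $\psi_1',\psi_2',\psi_3'$, use $g_i'=\alpha g_i-h_i$ together with $\alpha_1+\alpha_2\ge d/2$ and \eqref{convergence}, and fix $\epsilon_1,\epsilon_2,M,\epsilon_3,\alpha,N$ in that order (working on $t\ge t_0$, exactly as the paper does). The only, harmless, deviation is that you freeze $\alpha$ first via $\alpha C_\alpha\to 0$ and then take $N$ large against the now-fixed $C_\alpha$, whereas the paper couples the two through $\alpha=\tfrac{1}{2N}$; both yield the coefficient $\tfrac14$ in front of $g_1\circ\nabla u+g_2\circ\nabla u$.
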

\begin{proof}Let, for $t_0 >0$ fixed, $g_0:= \min \{ \int_0^{t_0} g_1(s) ds, \int_0^{+\infty} g_2(s)
	ds\}$. Then direct differentiation of $\mathcal{L}$ and using
	\eqref{E'}, \eqref{psi1}, \eqref{psi2} and \eqref{psi3} leads to
	\begin{equation*}
		\begin{aligned}
			&\mathcal{L}^{\prime}(t) \leq
			\bigg(\frac{N}{2}-\frac{c}{\varepsilon_2}\bigg)\bigg((g_1' \circ
			\nabla u)(t)+(g_2' \circ \nabla u)(t)\bigg)\\&-\int_{\Omega}
			\bigg[(g_0-\varepsilon_2)(\alpha_1+\alpha_2)-M\bigg]u_t^2 dx\\
			&\times \bigg(\frac{c C_{\alpha}}{\varepsilon_3}+\frac{Mc
				C_{\alpha}}{\varepsilon_1}\bigg)\bigg((h_1 \circ \nabla u)(t)+(h_2
			\circ \nabla u)(t)\bigg)\\&-\bigg[
			(\ell-\varepsilon_1)M-\varepsilon_3 \bigg]\int_{\Omega} \vert \nabla
			u \vert^2 dx,~t\geq t_0.
		\end{aligned}
	\end{equation*}
	Recalling that $g_i'=\alpha g_i-h_i$ where $i=1,2.$ We obtain
	\begin{equation*}
		\begin{aligned}
			\mathcal{L}^{\prime}(t) &\leq \alpha
			\bigg(\frac{N}{2}-\frac{c}{\varepsilon_2}\bigg)\bigg(g_1 \circ
			\nabla u + g_2 \circ \nabla u \bigg)(t)\\
			&-\bigg(\frac{N}{2}-\frac{c}{\varepsilon_2}\bigg)\bigg(h_1 \circ
			\nabla u + h_2 \circ \nabla u \bigg)(t)\\
			&-\int_{\Omega}
			\bigg[(g_0-\varepsilon_2)(\alpha_1+\alpha_2)-M\bigg]u_t^2 dx\\
			&\times \bigg(\frac{c C_{\alpha}}{\varepsilon_3}+\frac{Mc
				C_{\alpha}}{\varepsilon_1}\bigg)\bigg(h_1 \circ \nabla u)(t)+(h_2
			\circ \nabla u)(t)\bigg)\\&-\bigg[
			(\ell-\varepsilon_1)M-\varepsilon_3 \bigg]\int_{\Omega} \vert \nabla
			u \vert^2 dx,~t\geq t_0.
		\end{aligned}
	\end{equation*}
	Simplifying the above estimate, we arrive at
	\begin{equation*}
		\begin{aligned}
			\mathcal{L}^{\prime}(t) &\leq \alpha
			\bigg(\frac{N}{2}-\frac{c}{\varepsilon_2}\bigg)\bigg(g_1 \circ
			\nabla u + g_2 \circ \nabla u\bigg)(t)\\
			&-\bigg[\frac{N}{2}-\frac{c}{\varepsilon_2}-C_{\alpha}
			\big(\frac{c}{\varepsilon_3}+\frac{Mc}{\varepsilon_1} \big)
			\bigg]\bigg(h_1 \circ \nabla u + h_2 \circ \nabla u\bigg)(t)\\
			&-\int_{\Omega}
			\bigg[(g_0-\varepsilon_2)(\alpha_1+\alpha_2)-M\bigg]u_t^2 dx\\
			&-\bigg[ (\ell-\varepsilon_1)M-\varepsilon_3 \bigg]\int_{\Omega}
			\vert \nabla u \vert^2 dx,~t\geq t_0.
		\end{aligned}
	\end{equation*}
	By using the fact that $(\alpha_1+\alpha_2) \geq \frac{d}{2}$ and
	choosing
	$$\varepsilon_1=\frac{\ell}{2}, ~~\varepsilon_2=\frac{g_0}{2}~~M =\frac{dg_0}{8}'~~\varepsilon_3 < \frac{d\ell^2 d g_0}{16}$$
	we get
	
	$$ (\ell-\varepsilon_1)M-\varepsilon_3 \geq 4 (1- \ell)$$
	As a consequence of \eqref{convergence},  there exists $0<\alpha_0
	<1$ such that if $\alpha < \alpha_0$, then
	\begin{equation}\label{con:r1:4a}
		\alpha
		C_{\alpha}<\frac{1}{8\bigg[\frac{c}{\varepsilon_3}+\frac{Mc}{\varepsilon_1}
			\bigg]}.
	\end{equation}It is clear that \eqref{con:r1:4a} gives
	\begin{equation}\label{con:r1:4B}
		C_{\alpha}\bigg[ \frac{N}{2}-\frac{c}{\varepsilon_2}-C_{\alpha}
		\big(\frac{c}{\varepsilon_3}+\frac{Mc}{\varepsilon_1}
		\big)\bigg]<\frac{N}{2}.
	\end{equation}
	Choosing $\alpha=\frac{1}{2N}<\alpha_0,$, then for some $m >0$, we
	have
	\begin{equation*}
		\mathcal{L}^{\prime}(t) \le - m E(t)+ \frac{1}{4} \bigg(g_1 \circ
		\nabla u + g_2 \circ \nabla u)\bigg)(t).
	\end{equation*}Then \eqref{E:main0} is established. Moreover, one can choose  $N$ large enough so that $\mathcal{L}\sim
	E$.
\end{proof}

\begin{lemma}\label{lemma4a} For all $t\in
	\mathbb{R}^+$ and  fixed positive constants $m_0$ , we have the
	following estimates
	\begin{equation}\label{E1:r13:St}
		\int_{0}^{t} E(s) ds < m_0 \left(1+ \int_{0}^{t} h_0 (s) ds\right).
	\end{equation} where $h_0$ is defined in \eqref{h}.
\end{lemma}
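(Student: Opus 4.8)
The plan is to obtain \eqref{E1:r13:St} by integrating in time the differential inequality \eqref{E:main0} of Lemma~\ref{lemma3}; the infinite-memory term is exactly what forces $h_0$ to appear on the right. Integrating \eqref{E:main0} over $(0,t)$ and using $\mathcal{L}\sim E$ (so that $\mathcal{L}(t)\ge 0$ and $\mathcal{L}(0)\le c\,E(0)$) gives
\begin{equation*}
m\int_{0}^{t}E(s)\,ds\le c\,E(0)+\frac14\int_{0}^{t}\big(g_1\circ\nabla u+g_2\circ\nabla u\big)(s)\,ds ,
\end{equation*}
so it remains to estimate $\int_{0}^{t}\big(g_1\circ\nabla u+g_2\circ\nabla u\big)(s)\,ds$.

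In $(g_2\circ\nabla u)(s)$ I would split the age integral as $\int_{0}^{s}+\int_{s}^{+\infty}$. The tail $\int_{s}^{+\infty}$ is precisely the quantity bounded by $M_1 h_0(s)$ in Lemma~\ref{cor2}, so integrating it in $s$ produces the term $M_1\int_{0}^{t}h_0(s)\,ds$. The remaining truncated piece of $(g_2\circ\nabla u)(s)$, after the substitution $\sigma=s-\tau$, has the same structure as $(g_1\circ\nabla u)(s)$, namely $\int_{\Omega}a_i(x)\int_{0}^{s}g_i(s-\sigma)|\nabla u(s)-\nabla u(\sigma)|^{2}\,d\sigma\,dx$; for each of these I would use $|\nabla u(s)-\nabla u(\sigma)|^{2}\le 2|\nabla u(s)|^{2}+2|\nabla u(\sigma)|^{2}$, Fubini in $(s,\sigma)$ and $\int_{0}^{+\infty}g_i<+\infty$ to bound its time integral by $c\,\|a_i\|_{\infty}\big(\int_{0}^{+\infty}g_i\big)\int_{0}^{t}\int_{\Omega}|\nabla u(s)|^{2}\,dx\,ds$. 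Thus everything reduces to estimating $\int_{0}^{t}\int_{\Omega}|\nabla u|^{2}\,dx\,ds$, from which, through \eqref{E}, a bound on $\int_{0}^{t}\int_{\Omega}u_t^{2}\,dx\,ds$ and then on $\int_{0}^{t}E(s)\,ds$ would follow.

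This last reduction is where the work is, and I would close it by a short bootstrap with the functionals $\psi_1,\psi_2,\psi_3$ of Lemma~\ref{lemma3}. Integrating \eqref{psi1} bounds $\int_{0}^{t}\int_{\Omega}|\nabla u|^{2}$ in terms of $E(0)$, $\int_{0}^{t}\int_{\Omega}u_t^{2}$ and the time integrals of $h_1\circ\nabla u,\ h_2\circ\nabla u$; writing $h_i=\alpha g_i-g_i'$, the $(-g_i')$ part is controlled by $\int_{0}^{t}\big(-E'(s)\big)\,ds\le E(0)$ via \eqref{E'}, while the $\alpha g_i$ part is treated by the step above (its infinite-memory tail contributing the $\int_{0}^{t}h_0$ term), the coefficient it leaves on $\int_{0}^{t}\int_{\Omega}|\nabla u|^{2}$ being of size $c\,\alpha C_{\alpha}$, hence small by \eqref{convergence}. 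Integrating \eqref{psi2}--\eqref{psi3} likewise bounds $\int_{0}^{t}\int_{\Omega}u_t^{2}$ (for $t\ge t_0$, the part on $[0,t_0]$ being trivially $\le c\,E(0)$) in terms of $E(0)$, $\int_{0}^{t}h_0$ and a $\big(\varepsilon_3+c\,\alpha C_{\alpha}/\varepsilon_3\big)$-multiple of $\int_{0}^{t}\int_{\Omega}|\nabla u|^{2}$. Feeding the second estimate into the first and choosing $\varepsilon_3$, then $\alpha$, small enough lets one absorb the $\int_{0}^{t}\int_{\Omega}|\nabla u|^{2}$ term on the left, which gives $\int_{0}^{t}\int_{\Omega}|\nabla u|^{2}\,dx\,ds\le c\,E(0)+c\int_{0}^{t}h_0(s)\,ds$ and then, by \eqref{E} and the previous displays, $\int_{0}^{t}E(s)\,ds\le c\,E(0)+c\int_{0}^{t}h_0(s)\,ds$, which is \eqref{E1:r13:St}. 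The one delicate point is precisely this bootstrap: one must carry the memory convolutions $\int_{0}^{t}(h_i\circ\nabla u)$ and $\int_{0}^{t}(g_i\circ\nabla u)$ through every estimate, splitting off the infinite-memory tail with Lemma~\ref{cor2} (this is where $h_0$ is born), using the energy dissipation for the $-g_i'$ parts, and using $\alpha C_{\alpha}\to 0$ to keep the feedback coefficient on $\int_{0}^{t}\int_{\Omega}|\nabla u|^{2}$ strictly below $1$ so that the absorption goes through; everything else is routine Fubini, Cauchy--Schwarz and coercivity of $E$.
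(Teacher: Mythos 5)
Your proposal is correct and is, in substance, the argument the paper is pointing to: the paper proves nothing here beyond citing \cite{al2020general,Guesmiaetalrecent}, and those sources proceed exactly as you do --- integrate the Lyapunov/$\psi_i$ estimates in time, split the infinite-memory convolution at age $s$ so that its tail produces $\int_0^t h_0(s)\,ds$ via Lemma~\ref{cor2}, control the truncated convolutions by Fubini together with $\|a_1\|_{\infty}\int_0^{\infty}g_1+\|a_2\|_{\infty}\int_0^{\infty}g_2=1-\ell<1$, use $-\int_0^t\big(g_1'\circ\nabla u+g_2'\circ\nabla u\big)\le 2E(0)$ for the $-g_i'$ parts of $h_i$, and close by absorbing $\int_0^t\int_{\Omega}|\nabla u|^2$ thanks to $\alpha C_{\alpha}\to 0$. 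The only points worth making explicit in a write-up are that \eqref{psi2}--\eqref{psi3} (hence \eqref{E:main0}) are really derived for $t\ge t_0$, so the contribution of $[0,t_0]$ must be bounded separately by $t_0E(0)$ (which you note), and that the final passage to $\int_0^t E(s)\,ds$ uses the bounds on $\int_0^t\int_{\Omega}u_t^2$ and on the convolution terms as well as the one on $\int_0^t\int_{\Omega}|\nabla u|^2$.
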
 \begin{proof} The proof of this lemma can be established by following the same arguments in
	\cite{al2020general,Guesmiaetalrecent}. \end{proof}

\begin{lemma}\label{L2:St}
	If  $(A1)-(A2)$ are satisfied, then we have, for all $t > 0$ and for
	$i=1,2$,  the following estimates
	\begin{equation}\label{E1c:L2:St0}
		\int_{\Omega} a_i(x) \int_{0}^{t}g_i(s) \vert \nabla u(t)-\nabla
		u(t-s) \vert^{2}ds dx \le \frac{1}{\Lambda_i(t)}
		{H_i}^{-1}\left(\frac{ \Lambda_i(t)\mu_i(t)}{ \xi_i (t)}\right)
	\end{equation}
	where $\Lambda_i(t)=\frac{\lambda_i}{1+\int_{0}^{t} h_0(s)ds}$; $
	\lambda_i \in (0,1)$, $H_i$ and $\xi_i$ are introduced in $(A1)$,
	and
	\begin{equation}\label{E3d:r13:St}
		\mu_i(t):=-\int_{\Omega} a_i(x) \int_{0}^{t} g_i'(s) \vert
		\nabla(t)-\nabla(t-s)\vert^{2}ds dx\le -c E^{\prime}(t),
	\end{equation}.
\end{lemma}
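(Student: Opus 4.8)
The plan is to run the by-now standard convexity argument: estimate the relaxation kernel pointwise through $H_i^{-1}$, apply Jensen's inequality against a suitably normalised probability measure, and then replace the (a priori unbounded) total mass of that measure by $1/\Lambda_i(t)$ using the monotonicity of $x\mapsto xH_i^{-1}(b/x)$ together with Lemma~\ref{lemma4a}. Throughout write
\[
K_i(t):=\int_{\Omega}a_i(x)\int_0^{t}g_i(s)\,|\nabla u(t)-\nabla u(t-s)|^2\,ds\,dx
\]
for the left-hand side of \eqref{E1c:L2:St0}.

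The bound on $\mu_i$ in \eqref{E3d:r13:St} is the easy half. After the substitution $s\mapsto t-s$, formula \eqref{E'} writes $-E'(t)$ as $\frac12 g_1(t)\int_{\Omega}|\nabla u(t)|^2\,dx$, plus one half of the $(0,t)$-integral of $-g_1'$ against $a_1(x)|\nabla u(t)-\nabla u(t-s)|^2$ (which is exactly $\frac12\mu_1(t)$), plus one half of the corresponding $(0,+\infty)$-integral of $-g_2'$ (which is $\ge\frac12\mu_2(t)$). Since all three summands are non-negative, $-E'(t)\ge\frac12\mu_i(t)$, i.e.\ $\mu_i(t)\le -2E'(t)$.

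For $K_i(t)$ I would distinguish the two alternatives in $(A1)$. If $H_i$ is linear, $H_i(s)=c_0 s$, the two copies of $\Lambda_i(t)$ cancel on the right-hand side of \eqref{E1c:L2:St0}, and $(A1)$ reads $g_i(s)\le -g_i'(s)/(c_0\xi_i(s))\le -g_i'(s)/(c_0\xi_i(t))$ because $\xi_i$ is non-increasing; multiplying by $a_i(x)|\nabla u(t)-\nabla u(t-s)|^2$ and integrating gives $K_i(t)\le\mu_i(t)/(c_0\xi_i(t))$, the asserted bound. In the strictly convex case, $(A1)$ and the monotonicity of $H_i$ give (after the usual extension of $H_i$ past $(0,r]$ and, if needed, a separate treatment of a bounded initial $t$-interval so that $H_i^{-1}$ is evaluated inside its domain)
\[
g_i(s)\le H_i^{-1}\!\left(\frac{-g_i'(s)}{\xi_i(s)}\right),\qquad\text{hence}\qquad
K_i(t)\le\int_{\Omega}a_i(x)\int_0^{t}H_i^{-1}\!\left(\frac{-g_i'(s)}{\xi_i(s)}\right)|\nabla u(t)-\nabla u(t-s)|^2\,ds\,dx.
\]
Set $r_i(t):=\int_{\Omega}a_i(x)\int_0^{t}|\nabla u(t)-\nabla u(t-s)|^2\,ds\,dx$. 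If $r_i(t)=0$ the claim is trivial; otherwise $r_i(t)^{-1}a_i(x)|\nabla u(t)-\nabla u(t-s)|^2\,ds\,dx$ is a probability measure on $\Omega\times(0,t)$, and Jensen's inequality for the concave function $H_i^{-1}$, together with $\xi_i(s)\ge\xi_i(t)$, yields $K_i(t)\le r_i(t)\,H_i^{-1}\!\big(\mu_i(t)/(r_i(t)\xi_i(t))\big)$.

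It remains to pass from $r_i(t)$ to $1/\Lambda_i(t)$, and two observations do this. First, for every $b>0$ the map $x\mapsto x\,H_i^{-1}(b/x)$ is non-decreasing: since $H_i^{-1}$ is concave with $H_i^{-1}(0)=0$, the ratio $y\mapsto H_i^{-1}(y)/y$ is non-increasing, which is an equivalent statement. Second, $r_i(t)\le c\,\big(1+\int_0^{t}h_0(s)\,ds\big)$ with $c$ independent of $t$: using $|\nabla u(t)-\nabla u(t-s)|^2\le 2|\nabla u(t)|^2+2|\nabla u(t-s)|^2$, the coercivity encoded in \eqref{E} (so $\int_{\Omega}|\nabla u(\tau)|^2\,dx\le\frac2\ell E(\tau)$ for $\tau\ge0$), the monotonicity of $E$ (so $tE(t)\le\int_0^{t}E(s)\,ds$, which absorbs the term $t\int_{\Omega}a_i(x)|\nabla u(t)|^2\,dx$), and finally Lemma~\ref{lemma4a} to bound $\int_0^{t}E(s)\,ds$. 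Choosing $\lambda_i\in(0,1)$ with $\lambda_i\le 1/c$ then gives $r_i(t)\le\lambda_i^{-1}\big(1+\int_0^{t}h_0(s)\,ds\big)=1/\Lambda_i(t)$, and applying the first observation with $b=\mu_i(t)/\xi_i(t)$ upgrades the Jensen estimate to
\[
K_i(t)\le\frac1{\Lambda_i(t)}H_i^{-1}\!\left(\frac{\Lambda_i(t)\mu_i(t)}{\xi_i(t)}\right),
\]
which is \eqref{E1c:L2:St0}. I expect the only real obstacle to be bookkeeping: keeping the argument of $H_i^{-1}$ within its interval of definition (the reason for extending $H_i$ and isolating small $t$) and pinning down the constant $c$ above, since that is what fixes the admissible range of $\lambda_i$; the structural core—Jensen plus the monotonicity of $x\mapsto xH_i^{-1}(b/x)$—is then a one-liner.
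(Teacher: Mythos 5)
Your argument is correct and is essentially the paper's own proof read in the dual direction: the paper applies Jensen's inequality to the convex function $H_i$ acting on $\mu_i(t)$, after inserting the factor $\Lambda_i(t)$ so that the measure $\Lambda_i(t)\,a_i(x)\vert\nabla u(t)-\nabla u(t-s)\vert^2\,ds\,dx$ has total mass $\chi_i(t)<1$ (via Lemma~\ref{lemma4a} and a small choice of $\lambda_i$), whereas you apply Jensen to the concave $H_i^{-1}$ acting on $K_i(t)$ and then trade the mass $r_i(t)$ for $1/\Lambda_i(t)$ through the monotonicity of $x\mapsto xH_i^{-1}(b/x)$ --- the same ingredients (the pointwise use of $(A1)$, $\xi_i(s)\ge\xi_i(t)$, and the bound $r_i(t)\le c\big(1+\int_0^t h_0(s)\,ds\big)$ from Lemma~\ref{lemma4a}) assembled in a different order. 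Your derivation of $\mu_i\le -cE'$ from \eqref{E'} matches the paper's implicit claim, and you are in fact more explicit than the paper about keeping the argument of $H_i^{-1}$ inside its domain and about the linear case of $H_i$.
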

\begin{proof}
	To establish \eqref{E1c:L2:St0},  we introduce the following
	functional
	\begin{equation}\label{lamda}
		\chi_i (t):=\Lambda_i (t)  \int_{\Omega} a_i(x) \int_{0}^{t}\vert
		\nabla u(t)-\nabla u(t-s)\vert^{2}ds dx.
	\end{equation}
	Then, using the fact that $E$ is nonincreasing and \eqref{E} to
	get
	\begin{equation}\label{new}
		\begin{aligned}
			&\chi_i (t)\leq 2\Lambda_i(t)\bigg(\int_{\Omega} a_i(x) \int_{0}^{t}\vert \nabla u(t)\vert^2 + \int_{\Omega} a_i(x) \int_{0}^{t}\vert \nabla  u(t-s)\vert^{2}dsdx\bigg).\\
			&\hspace{0.4in}\le \frac{4 \Lambda_i(t) \vert \vert a_i \vert \vert_{\infty} }{\ell }\bigg(\int_{0}^{t} \big(E(t)+E(t-s)\big)ds\bigg)\\
			&\hspace{0.4in}\le \frac{8 \Lambda_i(t) \vert \vert a_i \vert \vert_{\infty}}{\ell } \int_{0}^{t} E(s) ds\\
			&\hspace{0.4in} < + \infty.
		\end{aligned}
	\end{equation}
	Thus,  $\lambda_i$ can be chosen so small so that, for all $t > 0$,
	\begin{equation}\label{E2:r13:St1}
		\chi_i (t)<1.
	\end{equation}
	Without loss of the generality, for all $t > 0$, we assume that
	$\xi_i(t)> 0$, otherwise we get an exponential decay from
	\eqref{E:main0}. The use of Jensen's inequality and using
	\eqref{E3d:r13:St} and \eqref{E2:r13:St1} gives
	\begin{equation}\label{p6:inv}
		\begin{aligned}
			&\mu_1(t)=\frac{1}{\Lambda_i(t)} \int_{0}^{t}\Lambda_i (t)(-g_i'(s))\int_{\Omega}{   a_i(x) \vert \nabla(t)-\nabla(t-s)\vert}^{2}dxds\\
			&\hspace{0.3in}\ge \frac{1}{\Lambda_i(t)}\int_{0}^{t}\Lambda_i (t)\xi_1(s) H_i(g_i(s))\int_{\Omega}{ a_i(x) \vert \nabla(t)-\nabla(t-s)\vert}^{2}dxds\\
			&\hspace{0.3in}\ge \frac{\xi_i(t)}{\Lambda_i(t)}\int_{0}^{t}H_i(\Lambda_i(t)g_i(s))\int_{\Omega}{ a_i(x)\vert \nabla(t)-\nabla(t-s)\vert}^{2}dxds\\
			&\hspace{0.3in}\ge \frac{\xi_i(t)}{\Lambda_i (t)}H_i\biggl(\Lambda_i (t)\int_{0}^{t}g_i(s)\int_{\Omega}a_i(x) {\vert \nabla(t)-\nabla(t-s)\vert}^{2}dxds\biggr)\\
			&\hspace{0.3in}=\frac{
				\xi_1(t)}{\Lambda_i}\overline{H_i}\biggl(\Lambda_i(t)\int_{0}^{t}g_i(s)\int_{\Omega}
			a_1(x){\vert \nabla u(t)-\nabla u(t-s)\vert}^{2}dxds\biggr),
		\end{aligned}
	\end{equation}
	Hence, \eqref{E1c:L2:St0} is established.
\end{proof}
\section{Decay result}\label{Decay}
In this section, we state, prove our main result and provide some
example to illustrate our decay results. Let us start introducing
some functions and then establishing several lemmas needed for the
proof of our main result. Now, for $i=1,2,$ let us take
\begin{equation}\label{minmax}
	\xi= \min \{\xi_i \}, ~~\mu= \max \{\mu_i \},~~\lambda= \max \{\lambda_i
	\}~~G=\left(H_1^{-1}+H_2^{-1}\right)^{-1}.
\end{equation}
As in \cite{Guesmiaetalrecent}, we introduce the following
functions:
\begin{equation}\label{G1}
	G_{1}(t):=\int_{t}^{r}\frac{1}{s G^{\prime}(s)}ds,~~G_5(t)=G_1^{-1}\bigg(c_1\int_0^t
	\xi(s)ds\bigg),
\end{equation}
\begin{equation}\label{G234}
	G_{2}(t)=t G^{\prime}(t),\quad G_3(t)=t
	(G')^{-1}(t), \quad G_4(t)=G_3^{*}(t).
\end{equation}
One can easily verify that $G_1$ is decreasing function over
$(0,r]$, and $G_{2}, G_3$ and $G_4$ be convex and increasing
functions on $(0,r]$. Further,  we introduce the class $S$ of
functions $\vartheta: \mathbb{R}_+ \rightarrow (0,r]$ satisfying for
fixed $c_1, c_2 > 0$,
\begin{equation}\label{xi}
	\vartheta\in C^1(\mathbb{R}_+),~~~\vartheta' \leq 0,
\end{equation}
and
\begin{equation}\label{dif}
	c_2 G_4\left[\frac{c}{d} \Lambda(t)
	h_0(t)\right]\leq c_1  \left(
	G_2\bigg(\frac{G_5(s)}{\vartheta(s)}\bigg)-\frac{G_2\left(G_5(t)\right)}{\vartheta(t)}\right),
\end{equation}
where $h_2$ and $q$ will be defined later in the proof of our main
result.
\begin{theorem}\label{main:th2017:1a}
	Assume that $(A1)-(A2)$  hold, then there exists a strictly positive
	constant $C$ such that the solution of \eqref{main} satisfies, for
	all $t \geq 0$,
	\begin{equation}\label{decay1e}
		E(t)\leq \frac{C G_5(t)}{\vartheta(t)\Lambda(t)},
	\end{equation}where $G_5$ and $\vartheta$ are defined in \eqref{G1} and \eqref{xi}
	respectively.
\end{theorem}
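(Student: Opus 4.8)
The plan is to compress the lemmas of Section~3 into one differential inequality for $\mathcal L$, to convert it into an autonomous ODE‑type inequality by a chain of convexity (Young) arguments, and finally to integrate it by comparison with the explicit profile $G_5$. First I would substitute the estimate \eqref{E1c:L2:St0} of Lemma~\ref{L2:St} into the master inequality \eqref{E:main0}. Splitting the infinite‑memory term as $g_2\circ\nabla u=\int_\Omega a_2(x)\!\int_0^t(\cdots)\,ds\,dx+\int_\Omega a_2(x)\!\int_t^{+\infty}(\cdots)\,ds\,dx$, I bound the first piece by \eqref{E1c:L2:St0}, the tail by Lemma~\ref{cor2}, and treat $g_1\circ\nabla u$ directly by \eqref{E1c:L2:St0}; using $\xi=\min\xi_i$, $\mu=\max\mu_i$, $\lambda=\max\lambda_i$ and $G^{-1}=H_1^{-1}+H_2^{-1}$ from \eqref{minmax} together with the monotonicity of the $H_i^{-1}$, this should yield
\begin{equation*}
\mathcal L'(t)\le -\,m\,E(t)+\frac{c}{\Lambda(t)}\,G^{-1}\!\left(\frac{\Lambda(t)\,\mu(t)}{\xi(t)}\right)+c\,h_0(t),\qquad t\ge t_0 .
\end{equation*}

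To eliminate $G^{-1}$ I would weight by $\Lambda$ (using $\Lambda'\le0$, $\mathcal L\ge0$, so $(\Lambda\mathcal L)'\le\Lambda\mathcal L'$) and then introduce a perturbed functional of the form $\mathcal F:=\xi(t)\,G'\!\big(\varepsilon_0\,\Lambda(t)\mathcal L(t)/E(0)\big)\,\Lambda(t)\mathcal L(t)+c\,\xi(t)\,\Lambda(t)E(t)$, with $\varepsilon_0$ small enough that the argument of $G'$ stays in $(0,r]$ (here one uses $\mathcal L\sim E\le E(0)$, $\Lambda\le\lambda<1$, and $G(0)=G'(0)=0$). Discarding the wrong‑sign contributions coming from $\xi'\le0$, $\Lambda'\le0$, $E'\le0$ and $G''\ge0$, and invoking the generalized Young inequality $G'(a)\,G^{-1}(b)\le b+G^{*}(G'(a))\le b+aG'(a)=b+G_2(a)$ with $b=\Lambda\mu/\xi$ and $\mu\le-cE'$ from \eqref{E3d:r13:St}, the memory term turns into a multiple of $\xi\,G_2(\varepsilon_0\Lambda\mathcal L/E(0))$ — absorbable into the dissipation for small $\varepsilon_0$ — plus a term $\sim\Lambda\mu$ absorbed by the last summand of $\mathcal F$; the residual $\xi\,G'(\cdots)\Lambda h_0$ is split by a second Young inequality for the conjugate pair $(G_3,G_4)$ of \eqref{G234}, which is what produces the forcing $G_4\!\big[\tfrac{c}{d}\Lambda h_0\big]$ (the constant $d=\min\{a_0,\delta\}$ entering through $\alpha_1+\alpha_2\ge d/2$). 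The outcome is an inequality for some $\mathcal R\sim\Lambda\mathcal L\sim\Lambda E$ of the form $\mathcal R'(t)\le -c_1\xi(t)\,G_2\!\big(c_2\mathcal R(t)\big)+c_2\,G_4\!\big[\tfrac{c}{d}\Lambda(t)h_0(t)\big]$.

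Finally I would compare $\mathcal R$ with $C\,G_5/\vartheta$. From \eqref{G1} one gets $G_1'=-1/G_2$, hence $G_5'=-c_1\xi\,G_2(G_5)$; and by convexity of $G_2$ with $G_2(0)=0$, $G_2(G_5/\vartheta)\ge\vartheta^{-1}G_2(G_5)$, so that $\big(G_5/\vartheta\big)'=-c_1\xi\,G_2(G_5)/\vartheta-G_5\vartheta'/\vartheta^{2}\ge -c_1\xi\,G_2(G_5/\vartheta)-G_5\vartheta'/\vartheta^{2}$. The defining inequality \eqref{dif} of the class $S$ is precisely what is needed to make $t\mapsto C\,G_5(t)/\vartheta(t)$ a supersolution of the Step‑2 inequality once $C$ is chosen large; a barrier/continuation argument — using $\mathcal R\le\mathcal R(t_0)$ and $E\le E(0)$ on $[0,t_0]$ to fix the constant, and $\mathcal R\sim\Lambda E$ — then gives $\mathcal R(t)\le C\,G_5(t)/\vartheta(t)$ for all $t\ge0$, and dividing by $\Lambda(t)$ yields \eqref{decay1e}.

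The step I expect to be genuinely delicate is the middle one: orchestrating the two Young inequalities so that the memory term emerges exactly as $\xi\,G_2(c_2\mathcal R)$ and the infinite‑memory contribution exactly as $G_4[\tfrac{c}{d}\Lambda h_0]$, while keeping every argument of $G,G',G^{-1},G_2,G_3$ inside the interval $(0,r]$ where strict convexity is available, and then verifying that the residual wrong‑sign leftovers from differentiating $\xi$ and $\Lambda$ and from the bound $G^{*}(G'(a))\le aG'(a)$ are truly dominated by the good terms (when the $H_i$ are merely linear this mechanism collapses and \eqref{E:main0} already gives exponential‑in‑$\int\xi$ decay).
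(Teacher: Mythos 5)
Your proposal follows essentially the same route as the paper's proof: the same master inequality $L'(t)\le -mE(t)+\frac{c}{\Lambda(t)}G^{-1}\!\left(\frac{\Lambda(t)\mu(t)}{\xi(t)}\right)+c\,h_0(t)$ obtained from \eqref{E:main0}, \eqref{E1c:L2:St0} and Lemma \ref{cor2}; the same weighted functional built from $G'\!\left(\varepsilon_0\frac{E(t)\Lambda(t)}{E(0)}\right)L(t)$, corrected by $cE$, multiplied by $\xi$ and then by $\Lambda$; the same two applications of the generalized Young inequality (one with $G^*$, one with the conjugate pair $(G_3,G_4)$ of \eqref{G234}) leading to $\mathcal F_2'\le -c_1\xi G_2(\mathcal F_2)+c_2\xi G_4\!\left[\frac{c}{d}\Lambda h_0\right]$; and the same closing mechanism based on $G_5'=-c_1\xi G_2(G_5)$, convexity of $G_2$, and condition \eqref{dif}. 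The only cosmetic difference is in the last step, where the paper introduces $\mathcal F_3=\epsilon_1\vartheta\mathcal F_2-\epsilon_1G_5$ and integrates $\left(G_1(\mathcal F_3)\right)'\ge c_1\xi$ (after a case split on whether $b_0\Lambda E\le 2G_5/\vartheta$), whereas you phrase the identical computation as a supersolution/barrier comparison.
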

\begin{proof}
	Using  \eqref{h}, \eqref{E:main0} and \eqref{E1c:L2:St0}, then for
	some positive constant $m$,  and any $t \geq 0$, we get
	\begin{equation}\label{e1s4b}
		\begin{aligned}
			L^{\prime}(t) \le -m
			E(t)+\frac{c}{\Lambda(t)}G^{-1}\left(\frac{\Lambda(t)
				\mu(t)}{\xi(t)}\right)+ c h_0(t).
		\end{aligned}
	\end{equation}
	Without lose of generality, one can assume that $E(0) > 0$. For
	$\varepsilon_{0} < r $, let the functional $\mathcal{F}$ defined by
	$$\mathcal{F}(t):=G^{\prime}\left(\varepsilon_{0}\frac{E(t)\Lambda(t)}{E(0)}\right)L(t),$$
	which satisfies $\mathcal{F} \sim E$. By noting that
	$G^{\prime\prime}\geq0,$ $\Lambda'\leq0$ and $E'\leq0$, we get
	\begin{equation}\label{p6:F10b}
		\begin{aligned}
			&\mathcal{F}^{\prime}(t)=\varepsilon_{0}\frac{(qE)^{\prime}(t)}{E(0)}G^{\prime\prime}\left(\varepsilon_{0}\frac{E(t)\Lambda(t)}{E(0)}\right)L(t)+G^{\prime}\left(\varepsilon_{0}\frac{E(t)\Lambda(t)}{E(0)}\right)L^{\prime}(t)\\
			&\hspace{0.3in}\le -m
			E(t)G^{\prime}\left(\varepsilon_{0}\frac{E(t)\Lambda(t)}{E(0)}\right)+\frac{c}{\Lambda(t)}
			G^{\prime}\left(\varepsilon_{0}\frac{E(t)\Lambda(t)}{E(0)}\right)G^{-1}\left(\frac{\Lambda(t)
				\mu(t)}{\xi(t)}\right)\\&\hspace{0.3in}+c h_0(t)
			G^{\prime}\left(\varepsilon_{0}\frac{E(t)\Lambda(t)}{E(0)}\right).
		\end{aligned}
	\end{equation}
	Let $G^{*}$ be the convex conjugate of $G$ in the sense of Young
	(see \cite{arnol2013mathematical}), then
	\begin{equation}\label{p6:conj0b}
		G^{*}(s)=s(G^{\prime})^{-1}(s)-G\left[(G^{\prime})^{-1}(s)\right],\hspace{0.1in}\text{if}\hspace{0.05in}s\in
		(0,G^{\prime}(r)]
	\end{equation}and $G^{*}$ satisfies the following generalized Young inequality
	\begin{equation}\label{p6:young0b}
		A B\le G^{*}(A)+G(B),\hspace{0.15in}\text{if}\hspace{0.05in}A\in
		(0,G^{\prime}(r)],\hspace{0.05in}B\in(0,r].
	\end{equation}So, with $A=G^{\prime}\left(\varepsilon_{0}\frac{E(t)\Lambda(t)}{E(0)}\right)$
	and $B=G^{-1}\left(\frac{\Lambda(t) \mu(t)}{\xi(t)}\right)$ and  using \eqref{p6:F10b}-\eqref{p6:young0b}, we arrive at
	\begin{equation}\label{E:m:xi10b}
		\begin{aligned}
			&\mathcal{F}^{\prime}(t)\le -m
			E(t)G^{\prime}\left(\varepsilon_{0}\frac{E(t)\Lambda(t)}{E(0)}\right)+\frac{c}{\Lambda(t)}
			G^{*}\left(G^{\prime}\left(\varepsilon_{0}\frac{E(t)\Lambda(t)}{E(0)}\right)\right)+c
			\left(\frac{ \mu(t)}{\xi(t)}\right)\\&\hspace{0.3in}+c h_0(t) G^{\prime}\left(\varepsilon_{0}\frac{E(t)\Lambda(t)}{E(0)}\right)\\
			&\hspace{0.3in}\le  -m
			E(t)G^{\prime}\left(\varepsilon_{0}\frac{E(t)\Lambda(t)}{E(0)}\right)+c\varepsilon_{0}\frac{E(t)}{E(0)}G^{\prime}\left(\varepsilon_{0}\frac{E(t)\Lambda(t)}{E(0)}\right)+c
			\left(\frac{ \mu(t)}{\xi(t)}\right)\\&\hspace{0.3in}+c h_0(t)
			G^{\prime}\left(\varepsilon_{0}\frac{E(t)\Lambda(t)}{E(0)}\right).
		\end{aligned}
	\end{equation}
	So, multiplying \eqref{E:m:xi10b} by $\xi(t)$ and using
	\eqref{E3d:r13:St} and the fact that
	$\varepsilon_{0}\frac{E(t)\Lambda(t)}{E(0)}<r$, gives
	\begin{equation*}
		\begin{aligned}
			&\xi(t) \mathcal{F}^{\prime}(t)\le -m  \xi(t)
			E(t)G^{\prime}\left(\varepsilon_{0}\frac{E(t)\Lambda(t)}{E(0)}\right)+c
			\xi(t)\varepsilon_{0}\frac{E(t)}{E(0)}G^{\prime}\left(\varepsilon_{0}\frac{E(t)\Lambda(t)}{E(0)}\right)\\&\hspace{0.3in}+c
			\mu(t)\Lambda(t)+c \xi(t)h_0(t)G^{\prime}\left(\varepsilon_{0}\frac{E(t)\Lambda(t)}{E(0)}\right)\\
			&\hspace{0.3in}\le-\varepsilon_{0}(\frac{m  E(0)}{\varepsilon_{0}}
			-c )
			\xi(t)\frac{E(t)}{E(0)}G^{\prime}\left(\varepsilon_{0}\frac{E(t)\Lambda(t)}{E(0)}\right)-c
			E^{\prime}(t)+c \xi(t)
			h_0(t)G^{\prime}\left(\varepsilon_{0}\frac{E(t)\Lambda(t)}{E(0)}\right).
		\end{aligned}
	\end{equation*}Consequently, recalling the definition of $G_2$ and choosing $\varepsilon_{0}$ so that $k=(\frac{m  E(0)}{\varepsilon_{0}}
	-c)>0$, we obtain, for all $t \in \mathbb{R}_+$,
	\begin{equation}\label{p6:main40b}
		\begin{aligned}
			\mathcal{F}_{1}^{\prime}(t)&\le -k
			\xi(t)\left(\frac{E(t)}{E(0)}\right)G^{\prime}\left(\varepsilon_{0}\frac{E(t)\Lambda(t)}{E(0)}\right)+c
			\xi(t)
			h_0(t)G^{\prime}\left(\varepsilon_{0}\frac{E(t)\Lambda(t)}{E(0)}\right)\\&=-k\frac{\xi(t)}{\Lambda(t)}
			G_{2}\left(\frac{E(t)\Lambda(t)}{E(0)}\right)+c \xi(t)
			h_0(t)G^{\prime}\left(\varepsilon_{0}\frac{E(t)\Lambda(t)}{E(0)}\right),
		\end{aligned}
	\end{equation}
	where $\mathcal{F}_{1}=\xi \mathcal{F}+c E \sim E$ and satisfies for
	some $\alpha_{1},\alpha_{2}>0.$
	\begin{equation}\label{p6:equiv20b}
		\alpha_{1}\mathcal{F}_1(t)\le E(t)\le \alpha_{2}\mathcal{F}_1(t).
	\end{equation}
	Since
	$G^{\prime}_{2}(t)=G^{\prime}(t)+t G^{\prime\prime}(t)$ and $G$ is
	strictly increasing and strictly convex on $(0,r],$ we find that
	$G_{2}^{\prime}(t), G_{2}(t)>0$ on $(0,r].$ Using the general Young
	inequality \eqref{p6:young0b} on the last term in \eqref{p6:main40b}
	with
	$A=G^{\prime}\left(\varepsilon_{0}\frac{E(t)\Lambda(t)}{E(0)}\right)$
	and $B=[\frac{c}{d}h_0 (t)]$,  we have for $d>0$
	\begin{equation}\label{essa1a}
		\begin{aligned}
			c h_0
			(t)G^{\prime}\left(\varepsilon_{0}\frac{E(t)\Lambda(t)}{E(0)}\right)&=\frac{d}{\Lambda(t)}
			\left[\frac{c}{d }\Lambda(t) h_0(t)\right] \bigg(G'
			\left(\varepsilon_{0}\frac{E(t)\Lambda(t)}{E(0)}\right)\bigg)\\&\leq
			\frac{d}{\Lambda(t)}
			G_3\bigg(G'\left(\varepsilon_{0}\frac{E(t)\Lambda(t)}{E(0)}\right)\bigg)+\frac{d}{\Lambda(t)}G_3^*\left[\frac{c}{d
			}\Lambda(t) h_0(t)\right]\\&\leq \frac{d}{\Lambda(t)}
			\left(\varepsilon_{0}\frac{E(t)\Lambda(t)}{E(0)}\right)\left(G'\left(\varepsilon_{0}\frac{E(t)\Lambda(t)}{E(0)}\right)\right)+\frac{d}{\Lambda(t)}
			G_4\left[\frac{c}{d }\Lambda(t) h_0(t)\right]\\&\leq
			\frac{d}{\Lambda(t)}
			G_2\left(\varepsilon_{0}\frac{E(t)\Lambda(t)}{E(0)}\right)+\frac{d}{\Lambda(t)}
			G_4\left[\frac{c}{d }\Lambda(t) h_0(t)\right].
		\end{aligned}
	\end{equation}
	Now, combining \eqref{p6:main40b} and \eqref{essa1a} and choosing
	$d$ small enough so that $k_0=(k-d)>0$, we arrive at
	\begin{equation}\label{R'}
		\begin{aligned}
			& \mathcal{F}_{1}^{\prime}(t) \leq - k \frac{\xi (t)}{\Lambda(t)} G_2 \left(\varepsilon_{0}\frac{E(t)\Lambda(t)}{E(0)}\right) + \frac{d \xi(t)}{\Lambda(t)}
			G_2\left(\varepsilon_{0}\frac{E(t)\Lambda(t)}{E(0)}\right)+\frac{d
				\xi(t)}{\Lambda(t)}
			G_4\left[\frac{c}{d }\Lambda(t) h_0(t)\right]\\
			&\hspace{0.1in}\leq  - k_1 \frac{\xi (t)}{\Lambda(t)} G_2
			\left(\varepsilon_{0}\frac{E(t)\Lambda(t)}{E(0)}\right)+\frac{d
				\xi(t)}{\Lambda(t)}G_4\left[\frac{c}{d }\Lambda(t) h_0(t)\right].
		\end{aligned}
	\end{equation}
	Using the equivalent property in \eqref{p6:equiv20b} and the
	increasing of $G_2$, we have
	\begin{equation*}
		G_2 \left(\varepsilon_{0}\frac{E(t)\Lambda(t)}{E(0)}\right)\geq G_2
		\bigg(d_{0} \mathcal{F}_{1}(t)\Lambda(t)\bigg).
	\end{equation*}
	Letting $\mathcal{F}_{2}(t):=d_{0} \mathcal{F}_{1}(t)\Lambda(t)$ and
	recalling $\Lambda'\leq0$, then we arrive at, for some $c_1, c_2
	>0$,
	\begin{equation}\label{4.36}
		\mathcal{F}_{2}'(t)\leq -c_1\xi (t)G_2(\mathcal{F}_{2}(t))+c_2
		\xi(t)G_4\left[\frac{c}{d }\Lambda(t) h_0(t)\right].
	\end{equation}%
	Since $d_{0}\Lambda(t)$ is  nonincreasing. Using the equivalent
	property $\mathcal{F}_{1}\sim E$ implies that there exists $b_0 > 0$
	such that $\mathcal{F}_{2}(t)\geq b_{0} E(t)\Lambda(t)$. Let $t\in
	\mathbb{R}_+$ and $\vartheta(t)$ satisfying \eqref{xi} and
	\eqref{dif}.\\If
	\begin{equation}\label{decay000}
		b_0\Lambda(t) E(t) \leq 2 \frac{G_5(t)}{\vartheta(t)},
	\end{equation}
	then, we have
	\begin{equation}\label{decay1}
		E(t) \leq \frac{2 }{b_0} \frac{G_5(t)}{\vartheta(t)\Lambda(t)}.
	\end{equation}
	If
	\begin{equation}\label{decay000}
		b_0\Lambda(t) E(t) > 2  \frac{G_5(t)}{\vartheta(t)}.
	\end{equation}
	Then, for any $0\leq s \leq t$, we get
	\begin{equation}\label{decay000}
		b_0q(s) E(s) > 2  \frac{G_5(t)}{\vartheta(t)},
	\end{equation}
	since, $\Lambda(t) E(t)$ is nonincreasing function. Therefore, for
	any $0\leq s \leq t$, we have
	\begin{equation}\label{psi1}
		\mathcal{F}_{2}(s) > 2 \frac{G_5(t)}{\vartheta(t)}.
	\end{equation} Using \eqref{psi1}, recalling the definition of $G_2$,  the fact that $G_2$ is
	convex and $0< \vartheta\leq 1$, we have, for any $0 \leq s \leq t$
	and $0 < \epsilon_1 \leq 1$,
	\begin{equation}\label{g2a}
		\begin{aligned}
			&G_2\bigg(\epsilon_1 \vartheta(s)\mathcal{F}_{2}(s)-\epsilon_1
			G_5(s)\bigg) \leq \epsilon_1
			\vartheta(s)G_2\bigg(\mathcal{F}_{2}(s)-
			\frac{G_5(s)}{\vartheta(s)}\bigg)\\
			&\leq \epsilon_1 \vartheta(s)\mathcal{F}_{2}(s)
			G'\bigg(\mathcal{F}_{2}(s)-
			\frac{G_5(s)}{\vartheta(s)}\bigg)-\epsilon_1
			\vartheta(s)\frac{G_5(s)}{\vartheta(s)}G'\bigg(\mathcal{F}_{2}(s)-
			\frac{G_5(s)}{\vartheta(s)}\bigg)\\
			&\leq \epsilon_1 \vartheta(s)\mathcal{F}_{2}(s)
			G'\bigg(\mathcal{F}_{2}(s)\bigg)-\epsilon_1
			\vartheta(s)\frac{G_5(s)}{\vartheta(s)}G'\bigg(\frac{G_5(s)}{\vartheta(s)}\bigg).
		\end{aligned}
	\end{equation}
	Now, we let
	\begin{equation}\label{F3}
		\mathcal{F}_{3}(s)=\epsilon_1 \vartheta(s)\mathcal{F}_{2}(s)-\epsilon_1
		G_5(s),
	\end{equation} where $\epsilon_1$ small enough so that $\mathcal{F}_{3}(0)\leq 1$. Then \eqref{g2a}
	becomes, for any $0 \leq s \leq t$,
	\begin{equation}\label{4.37}
		\begin{aligned}
			&G_2\bigg(\mathcal{F}_{3}(s)\bigg) \leq \epsilon_1
			\vartheta(s)G_2\bigg(\mathcal{F}_{2}(s)\bigg)- \epsilon_1
			\vartheta(s)G_2\bigg(\frac{G_5(s)}{\vartheta(s)}\bigg).
		\end{aligned}
	\end{equation}
	Further, we have
	\begin{equation}\label{g2}
		\begin{aligned}
			\mathcal{F}'_{3}(t)=\epsilon_1 \vartheta'(t)\mathcal{F}_{2}(t)
			+\epsilon_1\vartheta(s)\mathcal{F}'_{2}(t)-\epsilon_1 G_5'(t).
		\end{aligned}
	\end{equation}
	Since $\vartheta' \leq 0$ and using \eqref{4.36}, then for any $0
	\leq s \leq t$, $0 < \epsilon_1 \leq 1$,  we obtain
	\begin{equation}\label{g2}
		\begin{aligned}
			&\mathcal{F}'_{3}(t)\leq
			\epsilon_1\vartheta(t)\mathcal{F}'_{2}(t)-\epsilon_1 G_5'(t)\\
			&\leq -c_1\epsilon_1 \xi (t)
			\vartheta(t)G_2(\mathcal{F}_{2}(t))+c_2\epsilon_1 \xi (t)
			\vartheta(s)G_4\left[\frac{c}{d }\Lambda(t)
			h_0(t)\right]-\epsilon_1G_5'(t).
		\end{aligned}
	\end{equation}
	Then, using \eqref{dif} and \eqref{4.37}, we get
	\begin{equation}\label{f3}
		\begin{aligned}
			&\mathcal{F}'_{3}(t) \leq -c_1 \xi (t)
			G_2(\mathcal{F}_{3}(t))+c_2\epsilon_1 \xi
			(t)\vartheta(t)G_4\left[\frac{c}{d }\Lambda(t)
			h_0(t)\right]\\&-c_1\epsilon_1 \xi
			(t)\vartheta(t)G_2\bigg(\frac{G_5(t)}{\vartheta(t)}\bigg)-\epsilon_1G_5'(t).
		\end{aligned}
	\end{equation}
	From the definition of $G_1$ and $G_5$, we have
	\begin{equation*}
		G_1\left(G_5(s)\right)=c_1\int_0^s \xi(\tau) d\tau,
	\end{equation*}
	hence,
	\begin{equation}\label{psi'}
		G_5'(s)=-c_1 \xi(s) G_2\left(G_5(s)\right).
	\end{equation}
	Now, we have
	\begin{equation}\label{coll}
		\begin{aligned}
			&c_2\epsilon_1 \xi (t)\vartheta(t)G_4\left[\frac{c}{d }\Lambda(t)
			h_0(t)\right]-c_1\epsilon_1 \xi
			(t)\vartheta(t)G_2\bigg(\frac{G_5(t)}{\vartheta(t)}\bigg)-\epsilon_1G_5'(t)\\&=c_2\epsilon_1
			\xi (t)\vartheta(t)G_4\left[\frac{c}{d }\Lambda(t)
			h_0(t)\right]-\epsilon_1 \xi
			(t)\vartheta(t)G_2\bigg(\frac{G_5(t)}{\vartheta(t)}\bigg)+c\epsilon_1\xi(t)G_2\left(G_5(t)\right)\\&=\epsilon_1
			\xi (t)\vartheta(t)\bigg(c_2 G_4\left[\frac{c}{d }\Lambda(t)
			h_0(t)\right]-c_1G_2\bigg(\frac{G_5(t)}{\vartheta(t)}\bigg)+c_1\frac{G_2\left(G_5(t)\right)}{\vartheta(t)}\bigg).
		\end{aligned}
	\end{equation}
	Then, according to \eqref{dif}, we get
	$$\epsilon_1 \xi
	(t)\vartheta(t)\bigg(c_2G_4\left[\frac{c}{d }\Lambda(t)
	h_0(t)\right]-c_1G_2\bigg(\frac{G_5(t)}{\vartheta(t)}\bigg)-c_1\frac{G_2\left(G_5(t)\right)}{\vartheta(t)}\bigg)\leq
	0 $$ Then \eqref{f3} gives
	\begin{equation}\label{f30}
		\begin{aligned}
			&\mathcal{F}'_{3}(t) \leq -c_1 \xi (t) G_2(\mathcal{F}_{3}(t)).
		\end{aligned}
	\end{equation}
	Thus  from \eqref{f30} and the definition of $G_1$ and $G_2$ in
	\eqref{G1} and \eqref{G234}, we obtain
	\begin{equation}\label{end1}
		\bigg(G_1\left(\mathcal{F}_{3}(t)\right)\bigg)'\geq c_1
		\xi(t).
	\end{equation}
	Integrating  \eqref{end1} over $[0, t]$, we get
	\begin{equation}\label{end2}
		G_1\left(\mathcal{F}_{3}(t)\right)\geq c_1\int_0^t
		\xi(s)ds+G_1\left(\mathcal{F}_{3}(0)\right).
	\end{equation}
	Since $G_1$ is decreasing, $\mathcal{F}_{3}(0)\leq 1$ and
	$G_1(1)=0$, then
	
	\begin{equation}\label{end3}
		\mathcal{F}_{3}(t)\leq G_1^{-1}\bigg(c_1\int_0^t
		\xi(s)ds\bigg)=G_5(t).
	\end{equation}
	Recalling that $ \mathcal{F}_{3}(t)=\epsilon_1
	\vartheta(t)\mathcal{F}_{2}(t)-\epsilon_1 G_5(t)$, we have
	
	\begin{equation}\label{end4}
		\mathcal{F}_{2}(t)\leq \frac{(1+\epsilon_1)}{\epsilon_1}
		\frac{G_5(t)}{\vartheta(t)},
	\end{equation}
	Similarly, recall that $\mathcal{F}_{2}(t):=d_{0}
	\mathcal{F}_{1}(t)\Lambda(t)$, then
	\begin{equation}\label{end5}
		\mathcal{F}_{1}(t)\leq \frac{(1+\epsilon_1)}{d_0\epsilon_1}
		\frac{G_5(t)}{\vartheta(t)\Lambda(t)},
	\end{equation}
	Since $\mathcal{F}_{1}\sim E$, then for some $b>0$, we have
	$E(t)\leq b \mathcal{F}_{1}$; which gives
	\begin{equation}\label{decay2}
		E(t)\leq \frac{b(1+\epsilon_1)}{d_0\epsilon_1}
		\frac{G_5(t)}{\vartheta(t)\Lambda(t)},
	\end{equation}
	From \eqref{decay1} and \eqref{decay2}, we obtain the following
	estimate
	\begin{equation}\label{decay3}
		E(t)\leq c_3 \bigg(\frac{G_5(t)}{\vartheta(t)\Lambda(t)}\bigg),
	\end{equation}
	where $c_3=\max\{\frac{2}{b_0},
	\frac{b(1+\epsilon_1)}{d_0\epsilon_1}\}.$
\end{proof}
\section{Examples}
Let $g(t)=\frac{a}{(1+t)^\beta}$, where $\beta
>1$ and $0<a<\beta-1$ so that $(A1)$ is satisfied. In this
case $\xi(t)=\beta a^{\frac{-1}{\beta}}$ and
$G(t)=t^{\frac{\beta+1}{\beta}}$. Then, there exist positive
constants $a_i (i = 0,...,3)$ depending only on $a, \beta$ such that
\begin{equation}\label{gs}\begin{aligned}
		&G_4(t)=a_0 t^{\frac{\beta+1}{\beta}},~~~G_2(t)=a_1
		t^{\frac{\beta+1}{\beta}},~~~~G_1(t)=a_2
		(t^{\frac{-1}{\beta}}-1),~~G_5(t)=(a_3t+1)^{-\beta}.
	\end{aligned}
\end{equation}
We  will discuss two cases:\\Case 1: if
\begin{equation}\label{r}
	m_0 (1+t)^{r} \leq 1+\vert \vert \nabla u_{0}\vert \vert^2 \leq m_1
	(1+t)^{r}
\end{equation}
where $0 < r < \beta-1$ and $m_0 , m_1 > 0$, then we have, for some
positive constants $a_i (i = 4,...,7)$ depending only on $a, \beta,
m_0, m_1, r$,  the following:
\begin{equation}\label{gs0}\begin{aligned}
		& a_4 (1+t)^{-\beta+1+r}\leq h_0(t) \leq a_5 (1+t)^{-\beta+1+r},
	\end{aligned}
\end{equation}
\begin{equation}\label{gs10}\begin{aligned}
		&\frac{\lambda}{\Lambda(t)} \geq a_6\left\{%
		\begin{array}{ll}
			1+\ln(1+t), & \hbox{$\beta-r=2$;} \\
			2, & \hbox{$\beta -r> 2$;} \\
			(1+t)^{-\beta+r+2}, & \hbox{$1< \beta -r <2$ .} \\
		\end{array}%
		\right.
\end{aligned}\end{equation}
\begin{equation}\label{gs20}\begin{aligned}
		&\frac{\lambda}{\Lambda(t)} \leq a_7\left\{%
		\begin{array}{ll}
			1+\ln(1+t), & \hbox{$\beta-r=2$;} \\
			2, & \hbox{$\beta -r> 2$;} \\
			(1+t)^{-\beta+r+2}, & \hbox{$1< \beta -r <2$ .} \\
		\end{array}%
		\right.
	\end{aligned}
\end{equation}We notice that condition
\eqref{dif} is satisfied if
\begin{equation}\label{new}
	(t+1)^{\beta} \Lambda(t) h_0(t) \vartheta(t) \leq a_8 \bigg(1- (\vartheta)^{\frac{1}{\beta}}   \bigg)^{\frac{\beta}{\beta
			+1}}.
\end{equation}
where $a_8 > 0$  depending on $a, \beta, c_1$ and $c_2$. Choosing
$\vartheta(t)$ as the following
\begin{equation}\label{gs3}\begin{aligned}
		&\vartheta(t) = \lambda\left\{%
		\begin{array}{ll}
			(1+t)^{-p},~~~~~ p=r+1 & \hbox{$\beta-r \geq 2$;} \\
			(1+t)^{-p},~~~~~ p=\beta-1, & \hbox{$1< \beta -r <2$ .} \\
		\end{array}%
		\right.
	\end{aligned}
\end{equation}
so that \eqref{xi} is valid. Moreover, using \eqref{gs0} and
\eqref{gs10}, we see that \eqref{new} is satisfied if $0 < \lambda
\leq 1$ is small enough, and then \eqref{dif} is satisfied. Hence
\eqref{decay1e} and \eqref{gs20} imply that, for any $t \in
\mathbb{R}_+$
\begin{equation}\label{gsw1}\begin{aligned}
		&E(t) \leq a_9\left\{%
		\begin{array}{ll}
			\bigg(1+\ln(1+t)\bigg)(1+t)^{-(\beta-r-1)}, & \hbox{$\beta-r=2$;} \\
			(1+t)^{-(\beta-r-1)}, & \hbox{$\beta-r>2$;} \\
			(1+t)^{-(\beta-r-1)}, & \hbox{ $1< \beta-r <2$ .} \\
		\end{array}%
		\right.
	\end{aligned}
\end{equation}
Thus,  the  estimate \eqref{gsw1} gives $\lim_{t\rightarrow +\infty}
E(t)=0$.\\  \textbf{Case 2:} if $m_0 \leq 1+\vert \vert \nabla
u_{0}\vert \vert^2 \leq m_1$. That is $r = 0$ in \eqref{r} as it was
assumed in many papers in the literature. So, it clear the estimate
\eqref{gsw1} gives $\lim_{t\rightarrow +\infty} E(t)=0$ when $r =
0$.

\section*{Acknowledgment} The authors thank King Fahd University of Petroleum and Minerals (KFUPM)  for its continuous supports. This work is supported by KFUPM under project \# SB201026.

\section*{Conflict of Interest}
The authors declare that there is no conflict of interest regarding the publication of this paper.


\begin{thebibliography}{999}

\bibitem{Dafermos}
C. M. Dafermos, Asymptotic stability in viscoelasticity, \textit{Archive for rational mechanics and analysis} \textbf{37}(4) (1970), 297-308.

\bibitem{dafermos1970abstract} 
C. M. Dafermos, An abstract Volterra equation with applications to linear viscoelasticity, \textit{Journal of Differential Equations}, \textbf{7}(3) (1970), 554-569.


\bibitem{hrusa1985global} 
W. J. Hrusa, Global existence and asymptotic stability for a semilinear hyperbolic Volterra equation with large initial data, \textit{ SIAM journal on mathematical analysis}, \textbf{16}(1) (1985), 110-134.


\bibitem{Dassios} 
G. Dassios, Z. Filareti, Equipartition of energy in linearized 3-d viscoelasticity, \textit{Quarterly of applied mathematics}, \textbf{ 48}(4) (1990), 715-730.


\bibitem{Rivera} 
J. E. M. Rivera, Asymptotic behaviour in linear viscoelasticity, \textit{Quarterly of Applied Mathematics}, \textbf{ 52}(4) (1994), 628-648.


\bibitem{Cabanillas}
J. E. M. Rivera, E. C. Lapa, Decay rates of solutions of an anisotropic inhomogeneous n-dimensional viscoelastic equation with polynomially decaying kernels, \textit{Communications in mathematical physics}, \textbf{177}(3) (1996), 583-602.


\bibitem{rivera1996decay} 
J. E. M. Rivera, E. C. Lapa, R. Barreto, Decay rates for viscoelastic plates with memory, \textit{Journal of elasticity}, \textbf{ 44}(1) (1996), 61-87.

\bibitem{Riv-Peres} 
J. E. M. Rivera, A. P. Salvatierra, Asymptotic behaviour of the energy in partially viscoelastic materials, \textit{Quarterly of Applied Mathematics}, \textbf{59}(3) (2001), 557-578.




\bibitem{Riv-Oqu} 
J. E. M. Rivera, H. P. Oquendo, Exponential stability to a contact problem of partially viscoelastic materials, \textit{Journal of elasticity and the physical science of solids} \textbf{63} (2) (2001), 87-111.


\bibitem{canti1}  
M. M. Cavalcanti, H. P. Oquendo, Frictional versus viscoelastic damping in a semilinear wave equation, \textit{SIAM journal on control and optimization} \textbf{42}(4) (2003), 1310-1324.


\bibitem{a1a2} 
A. Guesmia, A. S. Messaoudi, A general decay result for a viscoelastic equation in the presence of past and finite history memories, \textit{Nonlinear Analysis: Real World Applications} \textbf{13}(1) (2012), 476-485.


\bibitem{Cavalcanti1} 
M. M. Cavalcanti, V. N. D. Cavalcanti, J. A. Soriano, Exponential decay for the solution of semilinear viscoelastic wave equations with localized damping, \textit{Electronic Journal of Differential Equations} \textbf{44}(2002), 1-14.


\bibitem{Mus-2017} 
M. I. Mustafa, Optimal decay rates for the viscoelastic wave equation \textit{Mathematical Methods in the Applied Sciences}, \textbf{41}(1) (2018), 192-204.


\bibitem{Guesmiaetalrecent} 
A. Guesmia, New general decay rates of solutions for two viscoelastic wave equations with infinite memory, \textit{Mathematical Modelling and Analysis}, \textbf{25}(3), (2020) 351-373.


\bibitem{al2020general} 
A. M. Al-Mahdi, General stability result for a viscoelastic plate equation with past history and general kernel, \textit{Journal of Mathematical Analysis and Applications},
\textbf{490}(2020), 1-19.


\bibitem{arnol2013mathematical}
V. I. Arnol'd, \textit{Mathematical methods of classical mechanics}, Springer Science and Business Media, 2013.



\end{thebibliography}
\end{document}